\documentclass[10pt,final]{siamltex}
\usepackage{array}
\usepackage{graphicx}
\usepackage{color}
\usepackage[cmex10]{amsmath}
\usepackage{amsfonts}
\usepackage[boxed,noline]{algorithm2e}
\usepackage{multirow}
\usepackage{pdflscape}

\usepackage{hyperref}

\DeclareMathOperator*{\argmin}{arg\,min}
\DeclareMathOperator*{\argmax}{arg\,max}

\graphicspath{{images/}}
\DeclareGraphicsExtensions{.png}

\newcommand{\bfx}{\mathbf{x}}
\newcommand{\bfz}{\mathbf{z}}
\newcommand{\bft}{\mathbf{t}}
\newcommand{\bfv}{\mathbf{v}}
\newcommand{\bfy}{\mathbf{y}}

\newcommand{\bfg}{\mathbf{g}}
\newcommand{\bfw}{\mathbf{w}}

\newcommand{\sign}{\mbox{sign}}
\newcommand{\support}{\textnormal{supp}}

\newcommand{\tr}{\mbox{tr}}

\newcommand{\N}{{\mathcal N}}
\newcommand{\C}{{\mathcal C}}

\newcommand{\F}{{\mathcal F}}
\newcommand{\R}{{\mathcal R}}
\newcommand{\Sh}{\mathcal{S}}

\newcommand{\real}[1]{\ensuremath{\mathbb{R}^{\mathbf{#1}}}}

\newcommand{\mr}[2]{\multirow{#1}{*}{#2}}
\newcommand{\mc}[3]{\multicolumn{#1}{#2}{#3}}

\newcommand{\mydef}{\ensuremath{\stackrel{\triangle}{=}}}
\newcommand{\Active}{\F}

\newcommand{\ve}[1]{\ensuremath{\mathbf{#1}}}
\newcommand{\ves}[2]{\ensuremath{\mathbf{#1}_{#2}}}

\newcommand{\mat}[1]{\ensuremath{#1}}
\newcommand{\matt}[1]{\ensuremath{\tilde{\mat{#1}}}}

\newcommand{\normdist}{\ensuremath{\mathcal{N}}}
\newcommand{\gaussian}[3]{\ensuremath{ #1 \sim  \normdist \left(#2,#3\right)} }




\newcommand{\normone}[1]{\ensuremath{\|{#1}\|_1}}


\begin{document}
\title{A multilevel framework for sparse optimization with
application to inverse covariance estimation and logistic regression
\thanks{The research leading to these results has received funding from the European Union's -
 Seventh Framework Programme (FP7/2007-2013) under grant agreement no 623212---MC Multiscale Inversion. This research was also funded (in part) by the Intel Collaborative Research Institute for Computational Intelligence
(ICRI-CI).
}
}

\author{Eran Treister\thanks{Department of Earth and Ocean Sciences, University of British Columbia, Vancouver, Canada. {\tt eran@cs.technion.ac.il.}}
\and Javier S. Turek\thanks{Intel Labs, 2111 NE 25th Ave., Hillsboro, OR 97124,  {\tt javier.turek@intel.com.}} \and Irad Yavneh\thanks{Department of Computer Science, Technion---Israel Institute of Technology, Haifa, Israel. {\tt irad@cs.technion.ac.il.}}}
\maketitle

\begin{abstract}
Solving $l_1$ regularized optimization problems is common in the fields of computational biology, signal
processing and machine learning. Such $l_1$ regularization is utilized to find sparse
minimizers of convex functions. A well-known example is the LASSO problem,
where the $l_1$ norm regularizes a quadratic function. A multilevel framework is presented for solving such $l_1$ regularized sparse optimization problems efficiently.
We take advantage of the expected sparseness of the solution, and create a hierarchy of problems of similar type, which is
traversed in order to accelerate the optimization process. This framework is applied for solving two problems: (1) the sparse
inverse covariance estimation problem, and (2) $l_1$-regularized logistic regression.
In the first problem, the inverse of an unknown covariance matrix of a multivariate normal
distribution is estimated, under the assumption that it is sparse. To this end, an $l_1$ regularized
log-determinant optimization problem needs to be solved. This task is challenging
especially for large-scale datasets, due to time and memory limitations. In the second problem, the $l_1$-regularization is added to the logistic regression classification objective to reduce overfitting to the data and obtain a sparse model.
Numerical experiments demonstrate the efficiency of the multilevel framework in accelerating existing iterative solvers for both of these problems.
\end{abstract}

\begin{keywords}
Sparse optimization, Covariance selection, Sparse inverse covariance estimation, Proximal Newton, Block Coordinate Descent, Multilevel methods, $l_1$-regularized logistic regression.
\end{keywords}
\begin{AMS}
90C06, 90C25, 90C22
\end{AMS}
\pagestyle{myheadings} \thispagestyle{plain} \markboth{Eran
Treister, Javier S. Turek and Irad Yavneh}{Multilevel Sparse Optimization}

\section{Introduction}

Sparse solutions of optimization problems are often sought in various fields such as signal processing,
machine learning, computational biology, and others \cite{zou2005regularization,sra2012optimization}. Particular applications include sparse modelling of signals \cite{elad2010sparse}, compressed sensing \cite{Donoho06,CW08}, speech recognition \cite{INVCOVSPEECH}, gene network analysis \cite{GENEEXPRESSIONANALYSIS}, and brain connectivity \cite{BRAINCONNECTIVITY}. To promote sparsity, an $l_1$ norm regularization term is often introduced, generally leading to convex optimization problems of the form
\begin{equation}\label{eq:FX}
\bfx^* = \argmin_{\bfx  \in \mathbb{R}^{n}} \; F(\bfx) = \argmin_{\bfx  \in \mathbb{R}^{n}} \; f(\bfx) + \lambda\|\bfx\|_1,
\end{equation}
where the function $f(\bfx)$ is smooth (continuously differentiable) and convex,
and $\lambda$ is a positive scalar parameter that balances between sparsity and adherence to minimizing $f(\bfx)$. A larger parameter $\lambda$ tends to produce a sparser minimizer $\bfx^*$, but also a higher value for $f(\bfx^*)$. Problem \eqref{eq:FX} is convex but non-smooth due to the regularizer, and traditional optimization methods
such as gradient descent tend to converge slowly. For this reason, in many cases special methods are developed for specific instances of \eqref{eq:FX}. A well-known special case is the LASSO problem \cite{LASSO}, where $f(\bfx)$ is a quadratic function,
\begin{equation}\label{eq:FX_QUAD}
\bfx^* = \argmin_{\bfx \in \mathbb{R}^{n}} \; \frac{1}{2}\bfx^T H\bfx + \bfx^T\bfg  + \lambda\|\bfx\|_1,
\end{equation}
with $\bfg\in\mathbb{R}^{n}$, and $H\in\mathbb{R}^{n\times n}$ a positive semi-definite matrix.
This problem can often be solved quite efficiently by so-called ``iterative shrinkage'' or ``iterative soft thresholding (IST)'' methods \cite{FN03,SNM03,LASSOWITHSSF,LASSOWITHPCD,BF07,SPARSA,WYGZ09,ELADZIBULEVSKY} and other methods \cite{GPSR,LASSOWITHCD,LASSOWITHCD2,BCD00,mista2014}.

As a rule, iterative methods that have been developed for solving the quadratic problem \eqref{eq:FX_QUAD} can be generalized and used to solve the general problem \eqref{eq:FX}.
This can be done by rather straightforward approaches:  at each iteration the smooth part $f(\bfx)$ in \eqref{eq:FX} is approximated by some quadratic function, whereas the non-smooth $l_1$ term remains intact. Then, a descent direction is computed by approximately solving the resulting $l_1$-regularized quadratic minimization problem of the form \eqref{eq:FX_QUAD} by iterative shrinkage methods. This approximation can be applied using only first order information (i.e., $\nabla f$), or second order information (both $\nabla f$ and $\nabla^2 f$). The former approach results in a gradient-descent-like method, while the latter approach, on which we will focus in this paper, is called the ``proximal Newton'' method \cite{PROXIMALNEWTONGENERAL}. We give a precise description in the next section.

In this work we propose a multilevel framework for accelerating existing solvers for problem \eqref{eq:FX}, based on the work of \cite{TY}
which introduced a similar framework for the LASSO problem.
In this framework, the convergence of existing iterative methods is accelerated using a
nested hierarchy of successively smaller versions of the problem. Exploiting the sparsity of the sought approximation,
the dimension of problem \eqref{eq:FX} is reduced by temporarily ignoring
ostensibly irrelevant unknowns, which remain zero for the duration of the multilevel iteration. That is, each reduced
problem is defined by \eqref{eq:FX}, restricted to a specially
chosen subset of variables. This yields a nested hierarchy of problems.
Subspace corrections are performed by applying iterative methods to each of the low dimensional problems in succession, with the aim of accelerating the optimization process. Under suitable conditions, this algorithm converges to a global minimizer of \eqref{eq:FX}.

In the second and third parts of the paper, we apply our framework to the solution of (1) the sparse inverse covariance estimation problem, and (2) the $l_1$-regularized logistic regression problem. Both of these have the form of \eqref{eq:FX}, and we focus on the first one in more details because it is significantly more complicated and challenging. In this problem, the inverse of the covariance matrix of a multivariate normal distribution is estimated from a relatively small set of samples, assuming that it is sparse.  Its estimation is performed by solving an $l_1$ regularized log-determinant optimization problem, on which we elaborate later in this paper. Many methods were recently developed for solving the covariance selection problem \cite{banerjee2006convex,banerjee2008model,d2008first,GLASSO,guillot2012iterative,DCQUIC,BIGQUIC,QUIC,mazumder2012exact,ORTHANT,BCDIC}, and a few of those \cite{QUIC,BIGQUIC,ORTHANT,BCDIC} involve a proximal Newton approach. In the present work we mostly focus on large-scale instances of this problem, which are required in fMRI \cite{BIGQUIC} and gene expression analysis \cite{GENEEXPRESSIONANALYSIS,honorio2013inverse} applications, for example. Such large scale problems are very challenging, primarily because of memory limitations, and the only two published methods that are capable of handling them are \cite{BIGQUIC,BCDIC}. In this paper we review the method of \cite{BCDIC} and accelerate it by our multilevel framework.
Moreover, we show that our framework is more efficient than other acceleration strategies for this problem. These ideas can be exploited for other large-scale instances of log-determinant sparse optimization problems such as \cite{witten2009covariance,ravishankar2013learning,huang2013optimal}. Following this, we briefly describe the $l_1$-regularized logistic regression problem, and accelerate the two methods \cite{CDN} and \cite{NEWGLMNET} using our multilevel framework.

\section{First and second order methods for $l_1$ regularized sparse optimization}\label{sec:relaxations_general}

As mentioned above, problem \eqref{eq:FX} can be solved by adapting iterated shrinkage methods for \eqref{eq:FX_QUAD}.
To achieve that, at iteration $k$ the smooth function $f$ in \eqref{eq:FX} is replaced by a quadratic approximation around the current iterate $\bfx^{(k)}$ to obtain a descent direction $\bfz^{(k)}$. More specifically, $\bfz^{(k)}$ is obtained by approximately solving
\begin{equation}\label{eq:F_tilde_general}
\begin{array}{rl}
\bfz^{(k)} =& \displaystyle{\argmin_{\bfz\in\mathbb{R}^{n}}\tilde F(\bfx^{(k)}+\bfz)}\\
= & \displaystyle{\argmin_{\bfz\in\mathbb{R}^{n}}{f(\bfx^{(k)}) + \langle \bfg^{(k)}, \bfz\rangle + \frac{1}{2}\langle \bfz,H^{(k)}\bfz\rangle + \lambda\|\bfx^{(k)}+\bfz\|_1}},
\end{array}
\end{equation}
where $\bfg^{(k)} = \nabla f(\bfx^{(k)})$, and $H^{(k)}$ is a positive definite matrix that is method specific. This problem is similar to \eqref{eq:FX_QUAD} and can be solved using the shrinkage methods mentioned earlier. The role of $H^{(k)}$ in \eqref{eq:F_tilde_general} is to either incorporate second order information ($H^{(k)} = \nabla^2f(x^{(k)})$, the Hessian of $f$) or mimic it in a simpler and cheaper way. If $H^{(k)} = \nabla^2f(x^{(k)})$ then this results in the ``proximal Newton'' method \cite{PROXIMALNEWTONGENERAL}, and in that case, if \eqref{eq:F_tilde_general}
is solved to sufficient accuracy, the proximal Newton method has a superlinear asymptotic convergence rate \cite{PROXIMALNEWTONGENERAL}.
Once $\bfz^{({k})}$ is found, the next iterate is obtained by
\begin{equation}\label{eq:Line-search}
\bfx^{(k+1)} = \bfx^{(k)} + \alpha \bfz^{({k})},
\end{equation}
where $\alpha>0$ is a scalar which may be chosen a priori or determined by a line-search procedure. For example, one may use the Armijo rule \cite{ARMIJO} or an exact linesearch if possible \cite{WYGZ09}.

A simpler approach for defining \eqref{eq:F_tilde_general} is to include only first order information, and to use a simpler $H^{(k)}$, often chosen to be a diagonal positive definite matrix, denoted $D^{(k)}$. In this case, the problem \eqref{eq:F_tilde_general}
has a closed-form solution
\begin{equation}\label{eq:IST_direction}
\bfz^{({k})} = \Sh_{\lambda(D^{(k)})^{-1}}\left(\bfx^{(k)} - (D^{(k)})^{-1} \bfg^{(k)} \right) - \bfx^{(k)},
\end{equation}
where
\begin{equation}\label{eq:shrinkage}
\Sh_\lambda(t) = \sign(t)\cdot\max(0,|t|-\lambda)
\end{equation}
is the ``soft shrinkage'' function, which reduces the absolute value of $t$ by $\lambda$, or sets it to zero if $\lambda > |t|$. This results in a first order shrinkage method, which is similar to gradient descent or quasi Newton methods, and can be seen as a generalization of existing shrinkage methods for \eqref{eq:FX_QUAD}. More specifically, a generalization of the separable surrogate functionals (SSF) method of \cite{LASSOWITHSSF} can be obtained by defining $D^{(k)}=cI$, where $I$ is the identity matrix and $c > \rho(\nabla^2f(x^{(k)}))$ is an upper bound on the spectral radius of the Hessian. Similarly, a generalization of the parallel coordinate descent (PCD) method \cite{LASSOWITHPCD} for \eqref{eq:FX} reads $D^{(k)} = diag(\nabla^2f(x^{(k)}))$ (the diagonal part of the Hessian). Such generalizations, which are mentioned in \cite{LASSOWITHPCD} for example, are quite straightforward. Moreover, in \cite{SPARSA} for example, the general problem \eqref{eq:FX} is addressed rather than the quadratic \eqref{eq:FX_QUAD}, which is the actual target of this work. We note that the convergence guarantee of such methods for \eqref{eq:FX} requires modest assumptions on the smooth function $f$ (i.e., the level set $\{z: f (\bfz) \leq f(\bfz_0)\}$ is compact, and the Hessian is bounded: $ ||\nabla^2f(\bfx)||< M $). Generally, such methods converge linearly, but can be accelerated by subspace methods like SESOP \cite{LASSOWITHPCD,ELADZIBULEVSKY} and non-linear conjugate gradients \cite{ELADZIBULEVSKY,TY}.

Although the shrinkage methods are generally more efficient than other traditional methods, they can be further accelerated. A main problem of these methods
is that, during the iterations, the iterates $\bfx^{(k)}$ may be denser than the final solution. In particular, if we start from a zero initial guess, we would
typically get several initial iterates $\bfx^{(k)}$ which are far less sparse than the final minimizer $\bfx^*$. The rest of the iterates will gradually become sparser, until the sparsity pattern converges to that of $\bfx^*$. The main drawback is that those initial iterations with the denser iterates may be significantly more expensive than the later iterations with the sparse iterates. For example, when solving \eqref{eq:FX_QUAD} a matrix-vector multiplication $H\bfx^{(k)}$ needs to be applied at each shrinkage iteration \eqref{eq:IST_direction}. If the matrix $H$ is given explicitly (i.e., not as a fast operator) and $\bfx^{(k)}$ is sparse, then computations can be saved by not multiplying the columns of $H$ that correspond to the zero entries of $\bfx^{(k)}$.

One of the most common and simplest ways to address this phenomenon is by a continuation procedure \cite{WYGZ09,SPARSA}. In this procedure a sequence of problems \eqref{eq:FX} that correspond to a sequence of decreasing regularization parameters $\lambda$ is solved. This sequence starts with a relatively large value of $\lambda$ which is gradually decreased, and at each stage the new initial guess is given by the approximate solution to \eqref{eq:FX} obtained with the previous, bigger, $\lambda$. Since a bigger $\lambda$ yields a sparser solution, this continuation procedure may decrease the number of non-zeros in the iterates $\bfx^{(k)}$ at the expense of applying additional iterations for the larger $\lambda$'s.

\section{A multilevel framework for $l_1$ regularized sparse optimization}
We begin this section by providing some definitions and motivation that will be useful in the remainder of the paper. Let
\begin{equation}\label{eq:supp}
\support{(\bfx)} = \{i:x_i\neq0\}
\end{equation}
denote the support of the vector $\bfx$: the set of indices of its non-zero elements. The key challenge of sparse optimization is to identify the best (small)
support for minimizing the objective $f(\bfx)$. Additionally, we must calculate the values of the non-zeros of this minimizer. In many cases, if we were initially
given the support of the minimizer, it would make the solution of \eqref{eq:FX} easier\footnote{In such cases where a support is known or assumed, the $l_1$ regularization may be dropped from the problem. However, here we focus on the solution of \eqref{eq:FX} for an unknown support, and use the known support case only as a motivation. Therefore, in this discussion we keep the $l_1$ regularization also if the support is known.}. For example, the LASSO problem \eqref{eq:FX_QUAD}
has $n$ variables, however, if the support of its minimizer $\C^* = \support{(\bfx^*)}$ is known and consists of only about $1\%$ of the $n$ entries, we can ignore all entries not in $\C^*$ and solve the problem
\begin{equation}\label{eq:FX_QUAD_supp}
\min_{\bfx_{c^*} \in \mathbb{R}^{|c^*|}} \; \frac{1}{2}\bfx_{c^*}^T H_{c^*}\bfx_{c^*} + \bfx_{c^*}^T\bfg_{c^*}  + \lambda\|\bfx_{c^*}\|_1,
\end{equation}
where $\bfx_{c^*}$, $H_{c^*}$ and $\bfg_{c^*}$ are the same components $\bfx$, $H$ and $\bfg$ from \eqref{eq:FX_QUAD}, restricted to the entries in $\C^*$. This is the same problem as \eqref{eq:FX_QUAD}, but is about 100 times smaller and therefore much cheaper to solve. This motivates our approach.

In our multilevel framework, the convergence of common iterative methods is accelerated using a
nested hierarchy of smaller versions of the problem referred to as \emph{coarse problems}.
At each multilevel iteration denoted by ``ML-cycle'',
we define a hierarchy of such coarse problems using the sparsity of the iterated approximations $\bfx^{(k)}$.
Each coarse problem is defined by \eqref{eq:FX}, restricted to a subset of the variables, while keeping the other variables as zeros.
This process is repeated several times, yielding a nested hierarchy of problems.
In each ML-cycle we traverse the entire hierarchy of levels, from the coarsest to the finest, applying iterations using methods like \eqref{eq:IST_direction}
or proximal Newton over each of the coarse problems in turn. We henceforth refer to such iterations as \emph{relaxations}.
These aim to activate the variables that comprise the support of a minimizer. We iteratively
repeat these ML-cycles until some convergence criterion is satisfied.

\subsection{Definition of the coarse problems and multilevel cycle}

As noted, at each level $l$ we define a reduced coarse problem by limiting problem \eqref{eq:FX}
to a subset of entries, denoted by $\C_l \subset\{1,...,n\}$. In this subsection we assume that the subset $\C_l$ is given and defer the discussion on how it is chosen to the next section.
Given $\C_l$, the coarse problem for level $l$ is defined by
\begin{equation}\label{eq:FX_coarse}
\min_{\substack{\bfx  \in \mathbb{R}^{n}, \\\support{(\bfx)}\subseteq\C_l}} \; F(\bfx),
\end{equation}
where $F$ is the same objective as in \eqref{eq:FX} and $supp{(\bfx)}$ is defined in \eqref{eq:supp}. Effectively, \eqref{eq:FX_coarse} has
only $|\C_l|$ unknowns, hence it is of lower dimension. Furthermore, if $\C_l$ contains the support of a minimizer of \eqref{eq:FX}, i.e., $\C_l \supseteq \support{(\bfx^*)}$, then $\bfx^*$ is also a solution of \eqref{eq:FX_coarse}. Otherwise, the solutions of the two problems are not identical.

We define our multilevel hierarchy by choosing nested subsets of variables $\{\C_l\}_{l=0}^{L}$. Given the current iterate, $\bfx^{(k)}$ we define the hierarchy
\begin{equation}\label{eq:hierarchy1}
 \{1,...,n\} = \C_0 \supset \C_1\supset ...\supset \C_L = \support{(\bfx^{(k)})}.
\end{equation}
In the ML-cycle, we treat the levels from $L$ to $0$ in succession by applying relaxations for the reduced problem \eqref{eq:FX_coarse} corresponding to each subset $\C_l$. We typically apply only one or two relaxations on levels $L-1,...,0$. On the coarsest level $L$, more relaxations may be applied because $\C_L$ is typically small and they are not expensive. The multilevel cycle procedure is presented in detail in Algorithm \ref{alg:ML}.

To define an iterative relaxation for \eqref{eq:FX_coarse} at each of the levels in \eqref{eq:hierarchy1}, one can adapt most iterative relaxations
that are suitable for the finest problem \eqref{eq:FX} by allowing only the elements in $\C_l$ to vary and fixing the other elements to zero. It is important to choose a relaxation whose cost is at worst proportional to the number of unknowns $|\C_l|$. As an example, consider again the relation between the original LASSO problem \eqref{eq:FX_QUAD} and one restricted to a much smaller set of variables, $|\C_l| \ll n$. If the matrices are given explicitly in memory, then each shrinkage iteration for \eqref{eq:FX_QUAD_supp} is significantly cheaper than a shrinkage iteration for \eqref{eq:FX_QUAD}, roughly proportional to $|\C_l|$. We note that any specific problem of the form \eqref{eq:FX}, for which there exists some relaxation whose cost is proportional to $|\C_l|$ when applied to the restricted problem \eqref{eq:FX_coarse}, is suitable to be accelerated with our ML approach.

\begin{algorithm}
\small
\DontPrintSemicolon
\label{alg:ML} \caption{A multilevel cycle for sparse optimization.}
\KwSty{Algorithm: $\bfx^{(k+1)} \leftarrow$ ML-cycle$(\bfx^{(k)})$}\;
\emph{\% Parameters: }\;
\emph{\% $\bfx^{(k+1)} \leftarrow$ Relax$(\bfx^{(k)},\C)$ : a relaxation method for \eqref{eq:FX_coarse}.}\;
\emph{\% Number of relaxations at each level: $\nu$.}\;
\emph{\% Maximal number of relaxations on the coarsest level: $\nu_{c}$.}\;
\begin{enumerate}\Indm
\item Define the hierarchy $\{\C_l\}_{l=0}^L$ in \eqref{eq:hierarchy1}. \;
\item Set $\bfx\leftarrow \bfx^{(k)}$\;
\item \label{step:coarsest}Apply $\bfx\leftarrow Relax(\bfx,\C_L)$ until coarsest-level convergence criterion is satisfied \;
\item \textbf{For }$l=L-1,...,0$ \;
$\quad\quad$ Apply $\bfx\leftarrow Relax(\bfx,\C_l)$ $\nu$ times for \eqref{eq:FX_coarse} restricted to $\C_l$.\;
\textbf{end}\;
\item Set $\bfx^{(k+1)} \leftarrow \bfx$\;
\end{enumerate}
\end{algorithm}

\subsection{Choosing the coarse variables} \label{sec:likely}
Given $\C_l$, our task is to select a subset of indices, $\C_{l+1}$, that is significantly smaller than $\C_l$ (see below), and is deemed most likely to contain the support of the ultimate solution we are seeking, $\support{(\bfx^*)}$. We begin by choosing to include the indices that are in the support of the current iterate, $\support{(\bfx^{(k)})}$. To these we add indices not in $\support{(\bfx^{(k)})}$, that are estimated to be relatively likely to end up in $\support{(\bfx^*)}$, namely, indices corresponding to variables $i$ with a relatively large absolute value of the current gradient,
$|(\nabla f(\bfx^{(k)}))_i|$. To motivate this choice, consider the minimization problem \eqref{eq:F_tilde_general} restricted to a single element of $\bfz$, denoted $z_i$, and assume that $\bfx^{(k)}_i = 0$. We get the scalar minimization problem
\begin{equation}\label{eq:1d_l1}
z_i^{opt} = \argmin_{z_i}\;\;\frac{1}{2}az_i^2 + bz_i + c + \lambda|z_i|,
\end{equation}
where $a = H^{(k)}_{ii} > 0$, $b = \bfg^{(k)}_i = (\nabla f(\bfx^{(k)}))_i$, and $c$ is a constant. A closed-form solution of \eqref{eq:1d_l1}, is given by
\begin{equation}
z_i^{opt} = \left\{
\begin{array}{ll}
\frac{b - \lambda}{a} \quad if \quad b > \lambda \\
\frac{b + \lambda}{a}  \quad if \quad b < -\lambda\\
0 \quad otherwise,
\end{array}
\right.
\end{equation}
which corresponds to element $i$ in \eqref{eq:IST_direction}. This indicates that if $|b| = |(\nabla f(\bfx^{(k)}))_i|$ is relatively large, then we
have a relatively good chance that the variable $z_i$ will become non-zero and $i$ will enter the support in the next iterate $\bfx^{(k+1)}$.

To summarize, for a given $\C_l$ we first decide on the size of $\C_{l+1}$. In this work we choose $|\C_{l+1}| = \max \left( \lceil\frac{1}{2}|\C_l|\rceil, |\support(\bfx^{(k)})| \right)$, and terminate the coarsening (setting $L = l$) when $|\C_l| = |\support(\bfx^{(k)})|$. Then, to populate $\C_{l+1}$, we first choose to include $\support{(\bfx^{(k)})}$, and then add the indices of the $|\C_{l+1}| - |\support{(\bfx^{(k)})}|$ additional variables $i$ with the largest values of $|(\nabla f(\bfx^{(k)}))_i|$. The choice of the coarsening ratio of approximately $1/2$ turns out to strike a good balance between ML-cycle cost and efficacy. If the cost of the relaxation at level $l$ is proportional to $|\C_l|$, then the cost of a ML-cycle with $\nu$ relaxations per level is approximately equal to  $2\nu$ relaxations on the finest level. This means that, although we include a relatively large fraction of the variables when we coarsen to the next level, the cost of the entire cycle remains relatively small.

Finally, we note that in practice we define the nested hierarchy $\{\C_l\}_{l=0}^L$ using the gradient from the relaxation on the finest level of the previous cycle, which includes all the variables. This relaxation is also used for monitoring convergence, as it is the only place where the gradient is calculated for all variables.

\section{Theoretical results} \label{sec:theory}

In this section we state some theoretical observations regarding the relaxation methods defined by \eqref{eq:F_tilde_general}-\eqref{eq:Line-search}, and our multilevel framework in Algorithm \ref{alg:ML}.

\subsection{Theoretical results for the relaxation methods}
We show that under suitable conditions \emph{any} relaxation method defined by \eqref{eq:F_tilde_general}-\eqref{eq:Line-search} is monotonically decreasing and convergent. We first prove the following lemmas.

\begin{lemma} {\rm (Monotonicity of the relaxation.)} \label{lem:monotonicity}
Assume that the Hessian is bounded $||\nabla^2f(\bfx)||< M$, and that $\bfx^{(k+1)} = Relax(\bfx^{(k)})$ is defined by \eqref{eq:F_tilde_general}-\eqref{eq:Line-search}, with $\bfg^{(k)}=\nabla f(\bfx^{(k)})$ and $H^{(k)}\succeq\gamma_{min}I\succ0$, where $\gamma_{min}$ is a positive constant. Then
\begin{equation}\label{eq:MLMI2}
F(\bfx^{(k)}) - F(\bfx^{(k+1)}) \geq K\cdot\|\bfx^{(k)} - \bfx^{(k+1)}\|^2 ~~~~ \forall \bfx^{(k)} \in \mathbb{R}^{n},
\end{equation}
where $K$ is a positive constant. Furthermore, the linesearch parameter $\alpha$ in \eqref{eq:Line-search} can be chosen to be
bounded away from zero, i.e., $\alpha \geq \alpha_{min}>0$.
\end{lemma}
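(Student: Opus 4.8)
The plan is to establish \eqref{eq:MLMI2} by comparing the true objective $F$ with the quadratic model $\tilde F(\bfx^{(k)}+\bfz)$ used in \eqref{eq:F_tilde_general}, and then to exploit the optimality of $\bfz^{(k)}$ for that model. First I would use the boundedness of the Hessian to obtain the standard descent-lemma-type inequality $f(\bfx^{(k)}+\bfz) \leq f(\bfx^{(k)}) + \langle \bfg^{(k)},\bfz\rangle + \tfrac{M}{2}\|\bfz\|^2$, and combine it with the $l_1$ term to get $F(\bfx^{(k)}+\bfz) \leq f(\bfx^{(k)}) + \langle \bfg^{(k)},\bfz\rangle + \tfrac{M}{2}\|\bfz\|^2 + \lambda\|\bfx^{(k)}+\bfz\|_1$. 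On the other side, since $H^{(k)}\succeq \gamma_{min}I$, the model $\tilde F(\bfx^{(k)}+\bfz)$ is $\gamma_{min}$-strongly convex in $\bfz$, so its minimizer $\bfz^{(k)}$ satisfies $\tilde F(\bfx^{(k)}) - \tilde F(\bfx^{(k)}+\bfz^{(k)}) \geq \tfrac{\gamma_{min}}{2}\|\bfz^{(k)}\|^2$. Noting that $\tilde F(\bfx^{(k)}) = F(\bfx^{(k)})$ and that (for the step $\alpha=1$) the chain of inequalities above relates $F(\bfx^{(k)}+\bfz^{(k)})$ to the model value, I can extract a bound of the form $F(\bfx^{(k)}) - F(\bfx^{(k)}+\bfz^{(k)}) \geq c\,\|\bfz^{(k)}\|^2$ for a positive constant $c$ depending on $\gamma_{min}$ and $M$.

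The main obstacle is the linesearch: the step is $\bfx^{(k+1)} = \bfx^{(k)} + \alpha\bfz^{(k)}$ with $\alpha$ possibly less than $1$, and \eqref{eq:MLMI2} must be stated in terms of $\|\bfx^{(k)}-\bfx^{(k+1)}\| = \alpha\|\bfz^{(k)}\|$. The key is to show the Armijo-type linesearch accepts some $\alpha$ bounded below by a fixed $\alpha_{min}>0$. For this I would use convexity of $\|\cdot\|_1$ to bound $\lambda\|\bfx^{(k)}+\alpha\bfz^{(k)}\|_1 \leq (1-\alpha)\lambda\|\bfx^{(k)}\|_1 + \alpha\lambda\|\bfx^{(k)}+\bfz^{(k)}\|_1$, together with the descent lemma applied with step $\alpha\bfz^{(k)}$, to obtain $F(\bfx^{(k)}+\alpha\bfz^{(k)}) \leq F(\bfx^{(k)}) + \alpha\Delta + \tfrac{M\alpha^2}{2}\|\bfz^{(k)}\|^2$, where $\Delta := \langle\bfg^{(k)},\bfz^{(k)}\rangle + \lambda\|\bfx^{(k)}+\bfz^{(k)}\|_1 - \lambda\|\bfx^{(k)}\|_1$ is the model decrement, which is $\leq -\tfrac{\gamma_{min}}{2}\|\bfz^{(k)}\|^2 < 0$ by strong convexity. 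This quadratic-in-$\alpha$ upper bound shows the sufficient-decrease condition holds for all $\alpha$ up to a threshold of order $\gamma_{min}/M$, so a backtracking search terminates at some $\alpha\geq\alpha_{min}$ with $\alpha_{min}$ independent of $k$ and $\bfx^{(k)}$.

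Finally I would assemble the pieces: with $\alpha\geq\alpha_{min}$ accepted, the sufficient decrease gives $F(\bfx^{(k)}) - F(\bfx^{(k+1)}) \geq \tilde c\,\alpha\|\bfz^{(k)}\|^2 \geq \tilde c\,\alpha_{min}\|\bfz^{(k)}\|^2$ for a positive $\tilde c$; rewriting $\|\bfz^{(k)}\|^2 = \alpha^{-2}\|\bfx^{(k)}-\bfx^{(k+1)}\|^2 \geq \|\bfx^{(k)}-\bfx^{(k+1)}\|^2$ (since $\alpha\leq 1$, assuming the linesearch never overshoots past $1$; otherwise absorb a constant) yields \eqref{eq:MLMI2} with $K = \tilde c\,\alpha_{min}>0$. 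The statement that $\alpha\geq\alpha_{min}>0$ is exactly the conclusion of the linesearch analysis in the previous paragraph, so it comes for free. I expect the delicate bookkeeping to be in tracking how $K$ and $\alpha_{min}$ depend on $\gamma_{min}$, $M$, and the specific sufficient-decrease parameter in the Armijo rule, but none of this is conceptually hard.
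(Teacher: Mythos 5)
Your proposal is correct and follows essentially the same route as the paper's proof: the descent lemma from the bounded Hessian, convexity of the $l_1$ norm to bound $\|\bfx+\alpha\bfz\|_1$, and the optimality of $\bfz^{(k)}$ against $\bfz=0$ in the $\gamma_{min}$-strongly convex model, yielding $F(\bfx^{(k)})-F(\bfx^{(k)}+\alpha\bfz^{(k)})\geq\frac{1}{2}(\gamma_{min}-\alpha M)\alpha\|\bfz^{(k)}\|^2$ and hence acceptance of any $\alpha<\gamma_{min}/M$. Your explicit conversion from $\alpha\|\bfz^{(k)}\|^2$ to $\|\bfx^{(k)}-\bfx^{(k+1)}\|^2$ using $\alpha\leq 1$ is in fact slightly more careful than the paper's final step.
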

\begin{proof}
The following analysis is inspired by \cite{QUIC} and \cite{SPARSA}. Let us drop the superscript
$^{(k)}$, and write for any $\bfx$, $\bfz$ and $0<\alpha<1$
\begin{equation}\label{eq:triangle_inequality}
\|\bfx+\alpha\bfz\|_1 = \|\alpha(\bfx+\bfz) + (1-\alpha)\bfx\|_1 \leq \alpha\|\bfx+\bfz\|_1 + (1-\alpha)\|\bfx\|_1.
\end{equation}
Next, since the Hessian is bounded we can write for any $\bfx$, $\bfz$ and $\alpha$
\begin{equation}\label{eq:bounded_Hessian}
f(\bfx+\alpha\bfz) \leq f(\bfx) + \alpha\bfz^T\nabla f + \frac{1}{2}\alpha^2 M\|\bfz\|^2.
\end{equation}
Now, let us assume that $\bfz$ was yielded by \eqref{eq:F_tilde_general}. We obtain
\begin{equation}\label{eq:alpha_proof}
\begin{array}{rl}
F(\bfx) - F(\bfx+\alpha\bfz) = & f(\bfx) + \lambda\|\bfx\|_1 - f(\bfx+\alpha\bfz) - \lambda\|\bfx+\alpha\bfz\|_1\\
\geq &  \lambda\|\bfx\|_1-\left(\alpha\bfz^T\nabla f + \frac{1}{2}\alpha^2 M\|\bfz\|^2 + \lambda\|\bfx+\alpha\bfz\|_1\right) \\
\geq &  -\left(\alpha\bfz^T\nabla f + \frac{1}{2}\alpha^2 M\|\bfz\|^2 + \lambda\alpha\|\bfx+\bfz\|_1- \lambda\alpha\|\bfx\|_1\right) \\
 =  & -\alpha\left(\bfz^T\nabla f + \frac{1}{2}\bfz^TH\bfz + \lambda\|\bfx+\bfz\|_1 - \lambda\|\bfx\|_1\right)  \\
    & + \frac{1}{2}\alpha\bfz^TH\bfz - \frac{1}{2}\alpha^2 M\|\bfz\|^2\\
\geq & \frac{1}{2}\alpha\bfz^TH\bfz - \frac{1}{2}\alpha^2 M\|\bfz\|^2,\\
\end{array}
\end{equation}
where the first and second inequalities are obtained by \eqref{eq:bounded_Hessian} and \eqref{eq:triangle_inequality}, respectively, and the third inequality follows from the fact that $\bfz$ achieves a better objective in \eqref{eq:F_tilde_general} than 0. Now, because we assume that $H\succeq\gamma_{min}I$ is used in the relaxation, then following \eqref{eq:alpha_proof} we write
\begin{equation}\label{eq:alpha_proof2}
\begin{array}{rl}
F(\bfx) - F(\bfx+\alpha\bfz) \geq & \frac{1}{2}( \gamma_{min} - \alpha M)\alpha\|\bfz\|^2 = K\cdot\alpha\|\bfz\|, \\
\end{array}
\end{equation}
which is always positive for any $0<\alpha < \frac{\gamma_{min}}{M}$. This proves \eqref{eq:MLMI2} for $K = \frac{1}{2}( \gamma_{min} - \alpha M)$.
\end{proof}

\bigskip
For the following results we use the notion of sub-gradients. $\partial F(\bfx)$, the sub-differential of $F$, is the set
\begin{equation} \label{eq:subgradient1}
\partial F(\bfx) = \left\{\nabla f(\bfx) + \lambda\bft : \begin{array}{lc}
t_i = \sign(x_i) ~~~~\mbox{if $x_i \neq 0$}  \\
t_i \in [-1,1] ~~~~~~ \mbox{if $x_i = 0$}
\end{array}\right\}.
\end{equation}
\noindent A vector $\bfx^*$ is a minimizer of \eqref{eq:FX}, if
and only if $0 \in \partial F(\bfx^*)$ \cite{fletcher1987practical}. We now extend Lemma 2 in \cite{SPARSA} to any relaxation of type \eqref{eq:F_tilde_general}-\eqref{eq:Line-search}. This lemma shows that, under suitable conditions, any point $\bar\bfx$ is either a stationary point of $F(\cdot)$, or else the result of $Relax(\bar\bfx)$ is a substantial distance away from $\bar\bfx$. The proof for this lemma is similar to the proof in \cite{SPARSA}.

\begin{lemma}{\rm (No stagnation of the relaxation.)}\label{lem:sufficient_reduction}
Let $\{\bfx^{(k)}\}$ be a series of points produced by $\bfx^{(k+1)} =
\mbox{Relax}(\bfx^{(k)})$, defined by \eqref{eq:F_tilde_general}-\eqref{eq:Line-search} with $ \gamma_{max}I \succeq H^{(k)}\succeq\gamma_{min}I\succ0$.
Let $\{\bfx^{(k_j)}\}$ be any infinite and converging subseries of $\{\bfx^{(k)}\}$, and let $\bar\bfx$ denote its limit.
Then $\bar\bfx$ is a stationary point of $F(\cdot)$ in \eqref{eq:FX}.
\end{lemma}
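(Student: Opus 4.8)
The plan is to argue by contradiction: suppose $\bar\bfx$ is not a stationary point of $F$, so $0\notin\partial F(\bar\bfx)$, and derive a contradiction with the convergence of $\{\bfx^{(k_j)}\}$. The intuition is that Lemma~\ref{lem:monotonicity} forces the successive steps $\|\bfx^{(k)}-\bfx^{(k+1)}\|$ to shrink to zero along the whole sequence (because $F(\bfx^{(k)})$ is monotonically decreasing and, being bounded below near the convergent subsequence, converges), whereas Lemma~\ref{lem:sufficient_reduction}'s missing companion statement --- which I would extract as the first step --- says that near a non-stationary point the relaxation step must have length bounded below by a positive constant. These two facts are incompatible.

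First I would establish the ``sufficient step length'' estimate: there is a neighborhood of $\bar\bfx$ and a constant $\delta>0$ such that for every $\bfx$ in that neighborhood, $\|\mbox{Relax}(\bfx)-\bfx\|\geq\delta$. To do this I would look at the minimization problem \eqref{eq:F_tilde_general} defining $\bfz$ at the point $\bfx$. Since $0\notin\partial F(\bar\bfx)$, by continuity of $\nabla f$ there is a fixed direction in which the directional derivative of $F$ at $\bar\bfx$ (and hence at nearby points) is bounded away from zero; call the corresponding descent magnitude $\eta>0$. Because $\gamma_{max}I\succeq H^{(k)}$, the quadratic model $\tilde F(\bfx+\cdot)$ can be decreased by at least a fixed positive amount (on the order of $\eta^2/\gamma_{max}$) by moving a fixed distance; since $\bfz^{(k)}$ is the (approximate) minimizer of that model, $\tilde F(\bfx+\bfz^{(k)})\leq \tilde F(\bfx)-\varepsilon$ for some $\varepsilon>0$ uniform over the neighborhood, and then $\frac{1}{2}\bfz^{(k)T}H^{(k)}\bfz^{(k)}\geq\varepsilon$ combined with $H^{(k)}\succeq\gamma_{min}I$ forces $\|\bfz^{(k)}\|^2\geq 2\varepsilon/\gamma_{max}=:\delta^2$. (The subtle point here is handling the non-smooth $\ell_1$ term: I would use the convexity estimate in \eqref{eq:triangle_inequality}-style bounds, exactly as in the proof of Lemma~\ref{lem:monotonicity}, to control $\lambda\|\bfx+\bfz\|_1$ against $\lambda\|\bfx\|_1$ along the chosen direction.)

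Second, I would combine this with Lemma~\ref{lem:monotonicity}. Since $F(\bfx^{(k)})$ is non-increasing and $F$ is continuous, $F(\bfx^{(k_j)})\to F(\bar\bfx)$, so the whole sequence $F(\bfx^{(k)})$ converges to $F(\bar\bfx)$; in particular $F(\bfx^{(k)})-F(\bfx^{(k+1)})\to 0$. By \eqref{eq:MLMI2} this gives $\|\bfx^{(k)}-\bfx^{(k+1)}\|\to 0$, equivalently (using $\alpha\geq\alpha_{min}$) $\|\bfz^{(k)}\|\to 0$. But for $j$ large enough $\bfx^{(k_j)}$ lies in the neighborhood of $\bar\bfx$ from the first step, so $\|\bfz^{(k_j)}\|\geq\delta>0$ for infinitely many indices --- contradicting $\|\bfz^{(k)}\|\to 0$. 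Hence $0\in\partial F(\bar\bfx)$, i.e.\ $\bar\bfx$ is stationary, which by convexity of $F$ means it is a global minimizer of \eqref{eq:FX}.

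The main obstacle is the first step: making the ``uniform descent of the quadratic model near a non-stationary point'' rigorous in the presence of the non-smooth regularizer, and in particular ensuring the lower bound $\delta$ is genuinely uniform over a neighborhood (it depends on $\nabla f$, which varies continuously, and on the uniform spectral bounds $\gamma_{min},\gamma_{max}$ on $H^{(k)}$, which are assumed). Everything after that is a short compactness/continuity argument. I would also need to be slightly careful that the relaxation solves \eqref{eq:F_tilde_general} ``to sufficient accuracy'' rather than exactly; as in \cite{SPARSA}, the argument only needs that $\bfz^{(k)}$ achieves an objective in \eqref{eq:F_tilde_general} at least as good as $\bfz=0$ (used already in Lemma~\ref{lem:monotonicity}) together with the model-decrease lower bound, both of which survive the inexact-solve setting.
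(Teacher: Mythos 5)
Your route is genuinely different from the paper's. The paper argues directly: it uses monotonicity to get $\|\bfx^{(k_j)}-\bfx^{(k_j+1)}\|\to 0$, hence $\bfz^{(k_j)}\to 0$ and $H^{(k_j)}\bfz^{(k_j)}\to 0$ (this is where the upper bound $\gamma_{max}I\succeq H^{(k)}$ enters), then writes the exact optimality condition of the subproblem, $-\nabla f(\bfx^{(k_j)})-H^{(k_j)}\bfz^{(k_j)}\in\lambda\,\partial\|\bfx^{(k_j)}+\bfz^{(k_j)}\|_1$, and passes to the limit using outer semicontinuity of $\partial\|\cdot\|_1$ to conclude $0\in\partial F(\bar\bfx)$. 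This avoids entirely the "uniform step length near a non-stationary point" estimate that your contradiction argument requires, which is precisely the part you flag as the main obstacle. Your second step (monotonicity plus $\alpha\geq\alpha_{min}$ forcing $\|\bfz^{(k)}\|\to 0$) coincides with the paper's.

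There is one concrete step in your first part that fails as written: from $\tilde F(\bfx+\bfz)\leq\tilde F(\bfx)-\varepsilon$ you cannot conclude $\frac{1}{2}\bfz^{T}H\bfz\geq\varepsilon$. The correct relation goes the other way --- for the exact minimizer one has $\tilde F(\bfx)-\tilde F(\bfx+\bfz)\geq\frac{1}{2}\bfz^{T}H\bfz$ (both are lower bounds on the decrease), and simple one-dimensional examples show the decrease can strictly exceed $\frac{1}{2}\bfz^{T}H\bfz$, so no lower bound on $\|\bfz\|$ follows this way. The repair is easy and stays within your strategy: bound the model decrease \emph{above} linearly, $\tilde F(\bfx)-\tilde F(\bfx+\bfz)\leq -\langle\bfg,\bfz\rangle+\lambda\|\bfz\|_1\leq(\|\bfg\|+\lambda\sqrt{n})\|\bfz\|$, which together with the uniform decrease $\varepsilon$ and boundedness of $\|\nabla f\|$ near $\bar\bfx$ gives $\|\bfz\|\geq\delta>0$. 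For the uniform $\varepsilon$ itself, fix a single test vector $\bfv$ with $\langle\nabla f(\bar\bfx),\bfv\rangle+\lambda\|\bar\bfx+\bfv\|_1-\lambda\|\bar\bfx\|_1<0$ (which exists iff $0\notin\partial F(\bar\bfx)$); this quantity is continuous in $\bfx$ for fixed $\bfv$, so combined with the secant bound \eqref{eq:triangle_inequality} and $H\preceq\gamma_{max}I$ it yields the uniform model decrease. Note, however, that this needs $\bfz$ to do at least as well as the point $\bfx+t\bfv$ in \eqref{eq:F_tilde_general}, so your closing remark that only the comparison with $\bfz=0$ is required for the inexact-solve setting is too optimistic; the paper's direct argument instead needs the (approximate) stationarity condition \eqref{eq:subgrad} of the subproblem.
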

\begin{proof}
Since the subseries $\{\bfx^{(k_j)}\}$ converges to $\bar\bfx$, then $\{F(\bfx^{(k_j)})\}$ converges to $F(\bar\bfx)$. Following Lemma \ref{lem:monotonicity}, the full series $\{F(\bfx^{(k)})\}$ is monotone and hence convergent because $\{F(\bfx^{(k_j)})\}$ is convergent. Therefore $\{F(\bfx^{(k_j+1)})-F(\bfx^{(k_j)})\}\rightarrow 0$, which implies following \eqref{eq:alpha_proof2} that $\|\bfx^{k_j} - \bfx^{k_j+1}\| \rightarrow 0$, and $\lim_{j\rightarrow\infty}\bfx^{(k_j+1)} = \bar\bfx$. By \eqref{eq:F_tilde_general}, the condition
\begin{equation} \label{eq:subgrad}
0 \in \partial\tilde F = \{\nabla f(\bfx^{(k_j)}) + H^{(k_j)}\bfz^{(k_j)} + \lambda\bft\}
\end{equation}
is satisfied for $\bfz^{(k_j)}$, where $\bft = \partial \|\bfx^{(k_j)} + \bfz^{(k_j)}\|_1$.
Then, by \eqref{eq:Line-search} we have $\bfx^{(k_j+1)} = \bfx^{(k_j)} + \alpha\bfz^{(k_j)}$. Because $\alpha > \alpha_{min}$ by Lemma \ref{lem:monotonicity}, $\|\bfx^{k_j} - \bfx^{k_{j}+1}\| \rightarrow 0$ implies $\|\bfz^{(k_j)}\| \rightarrow 0$, which, due to the upper bound on $H^{(k_j)}$, leads to $\|H^{(k_j)}\bfz^{(k_j)}\| \rightarrow 0$. Now as in \cite{SPARSA}, by taking the limit as $j\rightarrow\infty$, and using outer semicontinuity of $\partial \|\cdot\|_1$, we have that $0\in\partial F (\bar\bfx)$ defined in \eqref{eq:subgradient1}, hence $\bar\bfx$ is a stationary point of $F$.
\end{proof}

\bigskip
Next, we show that relaxation of type \eqref{eq:F_tilde_general}-\eqref{eq:Line-search} converges to a stationary point of \eqref{eq:FX}, which is also a minimum because $F(\bfx)$ is convex. Our proof follows the convergence proofs in \cite{LASSOWITHPCD,TY}.
\begin{theorem} {\rm (Convergence of the relaxation.)}\label{prop:Convergence2}
Assume that the level set $\R=\{\bfx:F(\bfx)\leq F(\bfx^{(0)})\}$ is
compact, and the Hessian is bounded: $ ||\nabla^2f(\bfx)||< M $. Let $\{\bfx^{(k)}\}$ be a
series of points produced by $\bfx^{(k+1)} =
\mbox{Relax}(\bfx^{(k)})$, defined by \eqref{eq:F_tilde_general}-\eqref{eq:Line-search}, with $\gamma_{min}I\prec H^{(k)} \prec\gamma_{max}I$, starting from an initial guess
$\bfx^{(0)}$. Then any limit point $\bfx^*$ of the sequence
$\{\bfx^{(k)}\}$ is a stationary point of $F$ in \eqref{eq:FX},
i.e., $0\in\partial F(\bfx^*)$, and $F(\bfx^{(k)})$ converges to
$F(\bfx^*)$.
\end{theorem}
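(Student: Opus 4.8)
The plan is to glue together Lemmas~\ref{lem:monotonicity} and~\ref{lem:sufficient_reduction} with a standard compactness-plus-monotonicity argument. First I would observe that the strict bounds assumed here, $\gamma_{min}I\prec H^{(k)}\prec\gamma_{max}I$, imply in particular $H^{(k)}\succeq\gamma_{min}I\succ0$, so Lemma~\ref{lem:monotonicity} applies and~\eqref{eq:MLMI2} gives $F(\bfx^{(k+1)})\leq F(\bfx^{(k)})$ for every $k$. Consequently the whole sequence $\{\bfx^{(k)}\}$ stays inside the level set $\R$, which is compact by hypothesis; hence $\{\bfx^{(k)}\}$ is bounded and has at least one limit point, and, since $F$ is continuous, it attains a finite minimum on $\R$, so the monotone nonincreasing sequence $\{F(\bfx^{(k)})\}$ is bounded below and converges to some value $F^*$.

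Next I would pick an arbitrary limit point $\bfx^*$, i.e., a subsequence $\bfx^{(k_j)}\to\bfx^*$. Continuity of $F$ gives $F(\bfx^{(k_j)})\to F(\bfx^*)$, and since the full sequence $\{F(\bfx^{(k)})\}$ already converges to $F^*$, we conclude $F(\bfx^*)=F^*$; together with the monotonicity this yields $F(\bfx^{(k)})\to F(\bfx^*)$, the second assertion. For the first assertion, the strict two-sided bounds imply $\gamma_{max}I\succeq H^{(k)}\succeq\gamma_{min}I\succ0$, exactly the hypothesis of Lemma~\ref{lem:sufficient_reduction}, which applied to the convergent subsequence $\{\bfx^{(k_j)}\}$ shows that its limit $\bfx^*$ satisfies $0\in\partial F(\bfx^*)$. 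Finally, since $f$ is convex and $\lambda\|\cdot\|_1$ is convex, $F$ is convex, so the stationary point $\bfx^*$ is in fact a global minimizer of~\eqref{eq:FX}; thus $F^*=\min F$ and every limit point is a minimizer.

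Since the substance has been pushed into the two lemmas, there is no single hard computational step; the care needed is purely in the bookkeeping. One must verify that the strict spectral bounds assumed in the theorem entail the (possibly non-strict, and two-sided) conditions under which the lemmas were proved — immediate — and one must be careful to use compactness of $\R$, not merely boundedness of the iterates, to guarantee that $\{F(\bfx^{(k)})\}$ is bounded below. The genuinely delicate point of the whole development, namely passing to the limit in the subgradient inclusion via outer semicontinuity of $\partial\|\cdot\|_1$, has already been handled inside the proof of Lemma~\ref{lem:sufficient_reduction}, so nothing further is required here.
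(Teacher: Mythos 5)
Your proposal is correct and follows essentially the same route as the paper: monotonicity from Lemma~\ref{lem:monotonicity}, compactness of $\R$ to extract a convergent subsequence, Lemma~\ref{lem:sufficient_reduction} to identify the limit as a stationary point, and continuity of $F$ to transfer convergence to the whole sequence of objective values. The one small divergence is that you bound $\{F(\bfx^{(k)})\}$ from below via compactness of $\R$ rather than the paper's assertion that $F$ is non-negative; your version is actually the safer one, since $f$ need not be non-negative in general.
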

\begin{proof}
By Lemma \ref{lem:monotonicity}, the series $\{F(\bfx^{(k)})\}$ is
monotonically decreasing. Since the objective $F$ in \eqref{eq:FX} is non-negative,
it is bounded from below, and hence the series $\{F(\bfx^{(k)})\}$
converges to a limit. Because the level set $\R$ is compact by
assumption, we have that $\{\bfx^{(k)}\}$ is bounded in $\R$, and
therefore there exists a sub-series $\{\bfx^{k_n}\}$ converging to
a limit point $\bfx^*$. By Lemma \ref{lem:sufficient_reduction}, the point $\bfx^*$ is a stationary point of $F(\cdot)$.
Since $F(\cdot)$ is
continuous, $\bfx^{(k_n)}\longrightarrow \bfx^*$ yields
$F(\bfx^{(k_n)})\longrightarrow F(\bfx^*)$. The limit of
$\{F(\bfx^{(k)})\}$ equals to that of any of its sub-series,
specifically $\{F(\bfx^{(k_n)})\}$, and thus
$F(\bfx^{(k)})\longrightarrow F(\bfx^*)$.
\end{proof}
\bigskip

The results above are intriguing because they show that we can use any positive definite $H^{(k)}$ in \eqref{eq:F_tilde_general}, and the resulting method converges. In particular, the analysis shows that one can use a positive definite inexact Hessian as in \cite{BCDIC,BIGQUIC}, and the method still converges.   In a way, this is similar to the property of preconditioners when solving linear systems. Now, one may wonder if it is possible to generate preconditioners $H^{(k)}$ for $\nabla^2f$ which are ``easily invertible'' in the sense of minimizing \eqref{eq:F_tilde_general}, and solve \eqref{eq:FX} more efficiently this way.

\subsection{Theoretical results for the multilevel framework}

From the definitions of \eqref{eq:FX} and \eqref{eq:FX_coarse}, we know that reducing $F(\bfx)$ on any of the coarser levels also reduces $F(\bfx)$ for the fine level. Therefore, if Algorithm 1 is used with a monotonically decreasing relaxation, then it is also monotonically decreasing. In addition, we have the following properties.
\begin{lemma} {\rm (Coarse solution correspondence.)} \label{prop:inter_solution}
Let $\C_l\supseteq \support(\bfx^*)$ be a subset of the variables \{1,...,n\}, where $\bfx^*$ is a solution of \eqref{eq:FX}. Let $\hat\bfx$ be a solution of problem \eqref{eq:FX_coarse} restricted to $\C_l$. Then $\hat\bfx$ is also a solution of \eqref{eq:FX}.
\end{lemma}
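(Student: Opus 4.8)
The plan is to exploit the fact that the feasible set of the coarse problem \eqref{eq:FX_coarse} restricted to $\C_l$, namely $\{\bfx\in\mathbb{R}^n : \support(\bfx)\subseteq\C_l\}$, sits between the singleton $\{\bfx^*\}$ and all of $\mathbb{R}^n$, so the coarse optimal value is squeezed to equal the fine optimal value. Concretely, I would proceed in three short steps.

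First, I would observe that $\bfx^*$ is feasible for the coarse problem: by hypothesis $\support(\bfx^*)\subseteq\C_l$, so $\bfx^*$ belongs to the constraint set of \eqref{eq:FX_coarse}. Since $\hat\bfx$ is a minimizer of $F$ over that constraint set, this immediately gives $F(\hat\bfx)\leq F(\bfx^*)$.

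Second, I would use that the coarse feasible set is contained in $\mathbb{R}^n$, the feasible set of \eqref{eq:FX}. Because $\bfx^*$ is a global minimizer of $F$ over $\mathbb{R}^n$ and $\hat\bfx\in\mathbb{R}^n$, we have $F(\bfx^*)\leq F(\hat\bfx)$. Combining the two inequalities yields $F(\hat\bfx)=F(\bfx^*)$, i.e.\ $\hat\bfx$ attains the global minimum value of $F$ on $\mathbb{R}^n$; hence $\hat\bfx$ is a solution of \eqref{eq:FX}. (If one prefers, one can alternatively phrase the last step via the optimality condition $0\in\partial F(\bfx^*)$ from \eqref{eq:subgradient1}, noting that $\partial F$ is the same set-valued map in both problems and that the coarse optimality of $\hat\bfx$ together with $F(\hat\bfx)=F(\bfx^*)$ forces $0\in\partial F(\hat\bfx)$ by convexity of $F$.)

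There is essentially no hard step here: the argument is purely a set-inclusion/optimal-value comparison and does not even require convexity of $F$ or uniqueness of the minimizer. The only point that deserves a word of care is making explicit that the constraint $\support(\bfx)\subseteq\C_l$ is satisfied by $\bfx^*$ and that relaxing this constraint to $\bfx\in\mathbb{R}^n$ cannot lower the optimal value; everything else is immediate.
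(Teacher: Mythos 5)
Your proposal is correct and is essentially the paper's own argument: both establish $F(\hat\bfx)\leq F(\bfx^*)$ from the feasibility of $\bfx^*$ for the coarse problem, and then conclude by comparing with the global optimality of $\bfx^*$ (which the paper phrases as a contradiction and you phrase as the reverse inequality $F(\bfx^*)\leq F(\hat\bfx)$). No gaps; the parenthetical subgradient remark is unnecessary but harmless.
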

\begin{proof}
Because $\C_l\supseteq \support(\bfx^*)$, then $\bfx^*$ is a feasible point of \eqref{eq:FX_coarse}. Since $\hat\bfx$ is a solution of \eqref{eq:FX_coarse}, then $F(\hat\bfx)\leq F(\bfx^*)$. Therefore, $\hat\bfx$ is also a solution of \eqref{eq:FX}, because otherwise we contradict the optimality of $\bfx^*$.
\end{proof}

\bigskip

\noindent For the next two properties we assume that the coarsest problem is solved exactly in Algorithm \ref{alg:ML}. From lemma \ref{prop:inter_solution}, the following corollary immediately holds.

\begin{corollary}\label{cor:multilevel_direct_solution}
If $\C_L \supseteq \support(\bfx^*)$ at the $k$-th cycle of Algorithm \ref{alg:ML}, then problem \eqref{eq:FX} is solved at that cycle.
\end{corollary}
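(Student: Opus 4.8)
The plan is to obtain the corollary directly from Lemma~\ref{prop:inter_solution} together with the monotonicity of the ML-cycle. Write $F^* = F(\bfx^*)$ for the global optimal value of \eqref{eq:FX}; since $F$ is convex, every solution of \eqref{eq:FX} attains $F^*$. At the $k$-th ML-cycle the hypothesis is $\C_L \supseteq \support(\bfx^*)$. By the standing assumption made just before the corollary, Step~\ref{step:coarsest} of Algorithm~\ref{alg:ML} produces an \emph{exact} solution $\hat\bfx$ of the coarse problem \eqref{eq:FX_coarse} restricted to $\C_L$. Applying Lemma~\ref{prop:inter_solution} with $l = L$ then yields that $\hat\bfx$ is also a solution of \eqref{eq:FX}, so $F(\hat\bfx) = F^*$.

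The remaining work is to verify that the sweep over the finer levels $l = L-1,\dots,0$ in Step~4 of Algorithm~\ref{alg:ML} cannot spoil this. I would argue as follows: a relaxation applied to \eqref{eq:FX_coarse} on any level also decreases the fine objective $F$ (this is exactly the observation recorded at the start of the subsection), and each such relaxation is monotone in $F$ --- for relaxations of the form \eqref{eq:F_tilde_general}-\eqref{eq:Line-search} this is Lemma~\ref{lem:monotonicity}. Hence the finitely many iterates produced within the cycle after $\hat\bfx$ have non-increasing $F$-values. Since $F(\hat\bfx) = F^*$ and $F(\bfx) \ge F^*$ for every $\bfx$, all these $F$-values must equal $F^*$; in particular $F(\bfx^{(k+1)}) = F^*$, so the output $\bfx^{(k+1)}$ is a minimizer of \eqref{eq:FX}. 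This is precisely the assertion that problem \eqref{eq:FX} is solved at the $k$-th cycle.

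I do not expect a genuine obstacle here: the corollary is essentially a repackaging of Lemma~\ref{prop:inter_solution}, and the whole argument fits in a few lines. The only things that require care are bookkeeping. First, one must invoke the hypothesis that the coarsest problem is solved exactly, so that Lemma~\ref{prop:inter_solution} applies to the actual output of Step~\ref{step:coarsest} and not merely to an approximation of it. Second, one must record explicitly that the relaxations on the finer levels are monotone in $F$, so that they cannot move the iterate to a point with $F > F^*$. If one wishes to push a little further, the quantitative bound \eqref{eq:MLMI2} shows that once $F$ is pinned at its minimum each finer-level step satisfies $\|\bfx^{(k)} - \bfx^{(k+1)}\| = 0$, so in fact $\bfx^{(k+1)} = \hat\bfx$; but this stronger statement is not needed for the corollary.
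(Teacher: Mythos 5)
Your proposal is correct and follows exactly the route the paper intends: the paper gives no explicit proof, stating only that the corollary ``immediately holds'' from Lemma~\ref{prop:inter_solution} under the standing assumption that the coarsest problem is solved exactly. Your additional bookkeeping --- that the monotone relaxations on levels $L-1,\dots,0$ cannot increase $F$ above the optimal value --- is a worthwhile explicit record of what the paper leaves implicit, but it is the same argument.
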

\bigskip

\begin{theorem} {\rm (No Stagnation of ML-cycle.)}\label{prop:NoStagnation}
Assume that the conditions of Lemma \ref{lem:monotonicity} hold for the relaxation method used in Algorithm \ref{alg:ML}. Let $\bfx$ be the solution of the coarsest level problem at Step \ref{step:coarsest} of Algorithm \ref{alg:ML}. If $\C_L \not\supseteq \supp(\bfx^*)$, then at least one
iterated shrinkage relaxation on one of the levels $L-1,...,0$ must change $\supp(\bfx)$.
\end{theorem}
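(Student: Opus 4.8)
The plan is to argue by contradiction, assuming that none of the relaxations performed on levels $L-1,\dots,0$ changes $\support(\bfx)$. We may assume $L\geq 1$, since if $L=0$ then $\C_L=\{1,\dots,n\}\supseteq\support(\bfx^*)$ and the hypothesis is vacuous. Under the contradiction assumption the support is never altered, so it stays equal to $\support(\bfx)\subseteq\C_L$ throughout the cycle; in particular the final iterate $\bfx^{(k+1)}$ returned by the ML-cycle satisfies $\support(\bfx^{(k+1)})\subseteq\C_L$, hence it is a feasible point of the coarse problem \eqref{eq:FX_coarse} on $\C_L$. Because $\bfx$, the output of Step~\ref{step:coarsest}, is by hypothesis the \emph{exact} minimizer of that coarse problem, this gives $F(\bfx^{(k+1)})\geq F(\bfx)$, while Lemma~\ref{lem:monotonicity} (which applies verbatim to each coarse relaxation, all of which are monotone) gives $F(\bfx^{(k+1)})\leq F(\bfx)$. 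Therefore $F(\bfx^{(k+1)})=F(\bfx)$.

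The key step is to upgrade this equality of objective values into equality of the iterates. Each relaxation in the chain from $\bfx$ to $\bfx^{(k+1)}$ decreases $F$ by at least $K\,\|\bfx_{\mathrm{before}}-\bfx_{\mathrm{after}}\|^2\geq 0$, by the quantitative bound \eqref{eq:MLMI2} of Lemma~\ref{lem:monotonicity}. These non-negative decrements sum to $F(\bfx)-F(\bfx^{(k+1)})=0$, so each of them vanishes, meaning every relaxation on levels $L-1,\dots,0$ leaves the iterate unchanged. In particular the relaxation on level $l=0$, which is unrestricted since $\C_0=\{1,\dots,n\}$, produces a direction $\bfz$ with $\bfx+\alpha\bfz=\bfx$; since $\alpha\geq\alpha_{min}>0$ by Lemma~\ref{lem:monotonicity}, this forces $\bfz=0$.

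To conclude I would invoke the optimality condition of the quadratic subproblem \eqref{eq:F_tilde_general} solved at level $0$: the returned direction $\bfz$ is (approximately) optimal for $\tilde F(\bfx+\cdot)$, and the fact that the method returns the zero direction means that $0$ is already optimal, i.e. $0\in\nabla f(\bfx)+H^{(k)}\bfz+\lambda\,\partial\|\bfx+\bfz\|_1$ with $\bfz=0$, which reduces to $0\in\nabla f(\bfx)+\lambda\,\partial\|\bfx\|_1=\partial F(\bfx)$. Hence $\bfx$ is a global minimizer of \eqref{eq:FX} with $\support(\bfx)\subseteq\C_L$, so $\C_L\supseteq\support(\bfx^*)$---contradicting the hypothesis and completing the argument. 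The last implication uses uniqueness of the minimizer, which holds in particular whenever $f$ is strictly convex, as in both applications considered in the paper.

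I expect the main obstacle to be precisely the middle step: turning the purely combinatorial premise ``the support never changes'' into the metric conclusion ``the iterate never moves,'' which is what makes the stationarity condition $\bfz=0$ available at the finest level. This rests on two ingredients already in place: that Step~\ref{step:coarsest} delivers an \emph{exact} coarse minimizer (so that mere feasibility on $\C_L$ bounds $F$ from below by $F(\bfx)$), and the quantitative form \eqref{eq:MLMI2} of monotonicity together with $\alpha\geq\alpha_{min}>0$, which jointly rule out a nonzero $\bfz$ ``wasted'' by a vanishing step or a vanishing objective decrease. If one wishes to avoid reasoning about the inexact inner solver (or the uniqueness caveat), a direct alternative is available once the iterate is known not to have moved: if $0\notin\partial F(\bfx)$ then there is a coordinate $j\notin\C_L$ with $x_j=0$ and $|(\nabla f(\bfx))_j|>\lambda$, and a single shrinkage relaxation \eqref{eq:IST_direction} applied at level $0$ has a nonzero $j$-th component, hence changes $\support(\bfx)$ outright.
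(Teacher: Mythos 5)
Your proof is correct and rests on the same ingredients as the paper's (exactness of the coarsest-level solve, the quantitative decrease \eqref{eq:MLMI2}, and $\alpha\geq\alpha_{min}>0$), but it is organized quite differently. The paper argues forward and locally: from $\C_L\not\supseteq\support(\bfx^*)$ it extracts a coordinate $q\notin\C_L$ with $|(\nabla f(\bfx))_q|>\lambda$, identifies the coarsest level $\C_{\hat l}$ containing $q$, observes that $\bfx$ is a fixed point of the relaxations on all levels coarser than $\hat l$ (the optimality conditions \eqref{eq:coarseOptimality} hold there), shows the relaxation at level $\hat l$ returns a nonzero direction and hence strictly decreases $F$, and concludes that an unchanged support would make the new point a feasible improvement for the coarsest problem --- contradicting the coarse optimality of $\bfx$. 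You argue backward and globally: assuming no support change anywhere, you sandwich $F(\bfx^{(k+1)})$ between $F(\bfx)$ from below (feasibility plus exact coarse optimality) and $F(\bfx)$ from above (monotonicity), telescope \eqref{eq:MLMI2} to force every relaxation to be a no-op, and extract $0\in\partial F(\bfx)$ from stationarity of the unrestricted level-$0$ subproblem. The paper's version buys a sharper statement --- it pinpoints the level at which the support must change --- while yours avoids any bookkeeping about which level first contains the violated coordinate and makes the quantitative form of Lemma \ref{lem:monotonicity} do the work of converting ``objective constant'' into ``iterate constant.'' The two caveats you flag (exact inner solves, uniqueness of $\bfx^*$) are equally present in the paper's own proof: its step ``$\C_L\not\supseteq\support(\bfx^*)$ implies $0\notin\partial F(\bfx)$'' needs the same uniqueness, and it likewise writes the exact optimality condition \eqref{eq:subgrad2} for the computed direction, so neither is a gap relative to the paper's standard of rigor.
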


\begin{proof}
Because $\bfx$ is a minimizer of the coarsest problem \eqref{eq:FX_coarse},
then for all $j \in \C_L$
\begin{equation}\label{eq:coarseOptimality}
\begin{array}{lcr}
(\nabla f(\bfx))_j + \lambda\sign(x_j) = 0 &\mbox{if }x_j \neq 0,  \\
|(\nabla f(\bfx))_j| \leq \lambda & \mbox{if } x_j = 0.
\end{array}
\end{equation}
Now, since $\C_L \not\supseteq \supp(\bfx^*)$, $\bfx$ is not a minimizer of the unrestricted problem \eqref{eq:FX}, so $0 \not\in \partial F(\bfx)$. Therefore, there exists
at least one variable $q\not\in\C_L$ for which $|(\nabla f(\bfx))_q| > \lambda$. Suppose that \eqref{eq:coarseOptimality} holds for $\{\C_l\}_{l=\hat{l}-1}^L$, such that $\C_{\hat{l}}$ is the coarsest level in the multilevel hierarchy that includes such a variable. Because $\{\C_l\}_{l=\hat{l}-1}^L$ satisfy \eqref{eq:coarseOptimality}, $\bfx$ is a stationary point of all the relaxations \eqref{eq:IST_direction} on those levels. However, on level $\hat{l}$ \eqref{eq:coarseOptimality} is violated, and the relaxation yields a direction $\bfz_{c_{\hat{l}}}$ fulfiling \begin{equation} \label{eq:subgrad2}
0 \in (\partial\tilde F)_{c_{\hat{l}}} = \{(\nabla f(\bfx))_{c_{\hat{l}}} + H_{c_{\hat{l}}}\bfz_{c_{\hat{l}}} + \lambda\bft_{c_{\hat{l}}}\},
\end{equation}
where $\bft_{c_{\hat{l}}} = \partial \|\bfx_{c_{\hat{l}}} + \bfz_{c_{\hat{l}}}\|_1$, and $(\nabla f(\bfx))_{c_{\hat{l}}}$ and $(H)_{c_{\hat{l}}}$ are the gradient and the Hessian approximation of the relaxation restricted to the entries in $\C_{\hat{l}}$. Since $\bfz_{c_{\hat{l}}}\neq0$ and the linesearch parameter $\alpha>\alpha_{min}$, then $\|\bfx-Relax(\bfx)\| > 0$, and hence, following Lemma \ref{lem:monotonicity}, $F(Relax(\bfx)) < F(\bfx)$. Now, if the support of $\bfx$ did not change following this relaxation, i.e., $\support{(Relax(\bfx))} = \support{(\bfx)}$, this would contradict the optimality of $\bfx$ with respect to the levels $\{\C_l\}_{l=\hat{l}-1}^L$.
\end{proof}
\bigskip

Our last Theorem proves that Algorithm \ref{alg:ML} converges when used with a suitable relaxation method. The Algorithm falls into the block coordinate descend framework in \cite{BCDTsengYun} where the blocks are the sets $\{\C_l\}_{l=0}^L$ in all levels. In particular, since $\C_0=\{1,...,n\}$, then all multilevel cycles end with a relaxation that includes all the variables, and hence the Gauss-Seidel rule in \cite{BCDTsengYun} is satisfied at most every $L\cdot\nu + \nu_c$ relaxations.

\begin{theorem} {\rm (Convergence of Algorithm \ref{alg:ML}.)}\label{prop:Convergence3}
Assume that the conditions of Theorem \ref{prop:Convergence2} hold for $f(\bfx)$ and $Relax(\bfx)$. Let $\{\bfx^{(k)}\}$ be a
series of points produced by $\bfx^{(k+1)} =\mbox{ML-cycle}(\bfx^{(k)})$, defined by Algorithm \ref{alg:ML} with $\nu>0$ and $\nu_c>0$, starting from an initial guess $\bfx^{(0)}$. Then any limit point $\bfx^*$ of the sequence
$\{\bfx^{(k)}\}$ is a stationary point of $F$ in \eqref{eq:FX}, and $F(\bfx^{(k)})$ converges to
$F(\bfx^*)$.
\end{theorem}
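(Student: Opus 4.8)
The plan is to reduce the convergence of Algorithm~\ref{alg:ML} to the already-established convergence theory for the single-level relaxation (Theorem~\ref{prop:Convergence2}) combined with the block coordinate descent framework of \cite{BCDTsengYun}, as hinted in the paragraph preceding the statement. First I would verify the hypotheses needed to invoke that framework: the objective $F$ is convex (sum of a smooth convex $f$ and the convex $\lambda\|\cdot\|_1$), the level set $\R$ is compact by assumption, and each ML-cycle consists of a finite, bounded number of block relaxations --- at most $L\cdot\nu + \nu_c$ of them, with $L$ itself bounded since the coarsening ratio is $\approx 1/2$ so $L = O(\log n)$. Crucially, every ML-cycle terminates with a relaxation on $\C_0 = \{1,\dots,n\}$, i.e. a full-space relaxation, so the ``essentially cyclic'' (Gauss--Seidel) condition of \cite{BCDTsengYun} is met: each coordinate block, and in particular the full block, is updated at least once within every window of $L\cdot\nu+\nu_c$ relaxations.

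Next I would establish the sufficient-decrease property that the block coordinate descent convergence theorem requires. Here the key observation is that Lemma~\ref{lem:monotonicity} applies verbatim to each block relaxation $Relax(\bfx,\C_l)$: restricting \eqref{eq:F_tilde_general}--\eqref{eq:Line-search} to the coordinates in $\C_l$ is nothing but running the same relaxation scheme on the reduced problem \eqref{eq:FX_coarse}, whose smooth part still has Hessian bounded by $M$ (it is a principal submatrix of $\nabla^2 f$) and whose Hessian approximation $H^{(k)}_{\C_l}$ still satisfies $\gamma_{min}I \preceq H^{(k)}_{\C_l} \preceq \gamma_{max}I$ (principal submatrices of a matrix with eigenvalues in $[\gamma_{min},\gamma_{max}]$ have the same bound). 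Hence each block relaxation yields $F(\bfx) - F(Relax(\bfx,\C_l)) \geq K\|\bfx - Relax(\bfx,\C_l)\|^2$ with a uniform $K>0$, and the linesearch parameter stays bounded away from zero. Since the zero components outside $\C_l$ are untouched, reducing $F$ on the reduced problem reduces $F$ on the full problem by the same amount, so the whole ML-cycle is monotonically decreasing and each of its constituent steps has the sufficient-decrease property.

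With monotonicity and sufficient decrease in hand, the argument then mirrors the proof of Theorem~\ref{prop:Convergence2}. The sequence $\{F(\bfx^{(k)})\}$ is monotone and bounded below (by $0$), hence convergent; compactness of $\R$ gives a convergent subsequence $\bfx^{(k_n)} \to \bfx^*$. Telescoping the sufficient-decrease inequalities over one cycle forces all the intra-cycle increments to tend to zero, so every relaxation step within the cycles indexed by $k_n$ has argument tending to $\bfx^*$; in particular the final full-space relaxation $Relax(\cdot,\C_0)$ does. Applying Lemma~\ref{lem:sufficient_reduction} (no stagnation) to that full-space relaxation --- which is precisely a relaxation of type \eqref{eq:F_tilde_general}--\eqref{eq:Line-search} on all $n$ variables --- yields $0 \in \partial F(\bfx^*)$, i.e. $\bfx^*$ is a stationary point. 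Continuity of $F$ then gives $F(\bfx^{(k_n)}) \to F(\bfx^*)$, and since $\{F(\bfx^{(k)})\}$ converges, its limit equals $F(\bfx^*)$; as $F$ is convex, $\bfx^*$ is a global minimizer.

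The main obstacle is the bookkeeping that makes the reduction to \cite{BCDTsengYun} legitimate: one must be careful that the block structure is genuinely fixed enough for their theorem to apply even though the hierarchy $\{\C_l\}$ is redefined at every cycle from the current gradient. The clean way around this is to bypass \cite{BCDTsengYun} in the details and argue directly as sketched in the third paragraph, using only the uniform sufficient-decrease constant $K$ and $\alpha_{min}$ (which are uniform precisely because the submatrix bounds $M$, $\gamma_{min}$, $\gamma_{max}$ are inherited by all reduced problems regardless of how $\C_l$ is chosen) and the fact that the cycle always ends with a full relaxation to which Lemma~\ref{lem:sufficient_reduction} applies. I expect the direct argument to be shorter and more self-contained than formally verifying all of the hypotheses of the cited block-coordinate-descent theorem.
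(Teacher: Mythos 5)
Your proposal is correct and, in its ``direct argument'' form (which you rightly identify as the cleaner route), is essentially the paper's own proof: track all intra-cycle relaxation iterates, use the uniform sufficient decrease from Lemma \ref{lem:monotonicity} to force consecutive increments to zero, note that each cycle ends with a full relaxation on $\C_0$, and invoke the argument of Lemma \ref{lem:sufficient_reduction} on that subsequence to get stationarity. Your explicit justification that the constants $K$, $\alpha_{min}$ are uniform across levels (principal submatrices inherit the Hessian bounds) is a detail the paper only implicitly assumes, but the structure of the argument is the same.
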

\begin{proof}
Let us now define $\{\bfy^{(s)}\}$ to be the series of points generated by all the relaxations that are performed within the cycles for producing $\{\bfx^{(k)}\}$. Lemma \ref{lem:monotonicity} and the relation between the problems \eqref{eq:FX} and \eqref{eq:FX_coarse} imply that $\{F(\bfy^{(s)})\}$ is monotonically non-increasing. Similarly to the proof of Theorem \ref{prop:Convergence2}, since $F(\cdot)$ in \eqref{eq:FX} is bounded from below, the series $\{F(\bfy^{(s)})\}$ converges to a limit, and therefore there exists a sub-series $\{\bfy^{(s_n)}\}$ converging to a limit point $\bfx^*$. Because we apply the same type of relaxation on all levels then following \eqref{eq:alpha_proof2}, $\{F(\bfy^{(s_j)})-F(\bfy^{(s_j+1)})\}\rightarrow 0$ implies that $\|\bfy^{(s_j)} - \bfy^{(s_j+1)}\| \rightarrow 0$, and $\lim_{j\rightarrow\infty}\bfy^{(s_j+1)} = \bfx^*$. In a similar way this leads to $\lim_{j\rightarrow\infty}\bfy^{(s_j+t)} = \bfx^*$ for $t=1,...,L\cdot\nu + \nu_c$. By the definition of Algorithm \ref{alg:ML}, a full relaxation is applied on one of the points $\bfy^{(s_j+t)}$ (i.e., a relaxation that includes all variables in $\C_0=\{1,...,n\}$). This means that at least one of the subseries $\bfy^{(s_j+t)}$ includes an infinite subseries of fine-level points converging to $\bfx^*$, and each direction obtained by the corresponding relaxation satisfies \eqref{eq:subgrad}. By the same arguments that follow Equation \eqref{eq:subgrad} in the proof of Lemma \ref{lem:sufficient_reduction}, $\bfx^*$ is a stationary point of $F(\cdot)$. Since $F(\cdot)$ is continuous, $\bfy^{(s_j)}\longrightarrow \bfx^*$ yields $F(\bfy^{(s_j)})\longrightarrow F(\bfx^*)$. The limit of
$\{F(\bfy^{(s)})\}$ equals to that of any of its sub-series,
specifically $\{F(\bfy^{(s_n)})\}$. Thus $F(\bfx^{(k)})$ which is a subseries of $\{F(\bfy^{(s)})\}$ converges to $F(\bfx^*)$.
\end{proof}
\bigskip

\textbf{Organization and notation.} Until now we described a general framework for $l_1$ regularized convex optimization. The remainder of the paper is devoted to two specific problems: one is the sparse inverse covariance estimation on which we focus extensively, and the other is $l_1$-logistic regression. For the first problem it is natural to consider the unknowns as a matrix (the estimated inverse of the covariance matrix), and hence we revert to the familiar matrix notation $A\in\mathbb{R}^{n\times n}$, instead of $\bfx\in\mathbb{R}^n$. This is the only difference in notation between the first and second parts (e.g., we minimize $F(\bfx)$ in sections 1-4, $F(A)$ in sections 5-8, and $F(\bfw)$ in sections 9-10. In all cases, $F()$ is the $l_1$-regularized non-smooth objective).

\begin{table*}[h]
\centering
\begin{tabular}{ccc}
\hline
Sections 1-4:  & Sections 5-8:  & Section 9-10: \\
General Framework  & Sparse Inverse   & $l_1$-regularized\\
 & Covariance Estimation &  Logistic Regression\\
\hline
\hline
$\bfx$ - unknown vector.& $A$ - unknown matrix. & $\bfw$ - unknown vector.\\\hline
$n$ - dimension of $\bfx$ & $n$ - dimension of $A$ & $n$ - dimension of $\bfw$ \\
($\bfx\in\mathbb{R}^n$).& ($A\in\mathbb{R}^{n\times n}$). & ($\bfw\in\mathbb{R}^n$).\\
\hline
 & Data samples and matrix & Data samples and matrix\\
 & $\left\{\ves{y}{i}\right\}_{i=1}^{m} \in \real{n}$, $S\in \real{n\times n}$  & $\left\{\ves{x}{i}\right\}_{i=1}^{m} \in \real{n}$, $X\in\real{n\times m}$,$\ve{y}\in\real{m}$\\
\hline
\end{tabular}
\caption{The notation used in the different sections of this paper.}
\end{table*}

\section{The sparse inverse covariance estimation problem}

Estimating the parameters of a multivariate Gaussian (Normal) distribution is a common problem in many applications in machine learning, computational biology, and other fields \cite{banerjee2008model}. Given a set of samples $\{\ve{y}_i\}_{i=1}^m\in\mathbb{R}^n$, where \gaussian{\ve{y}_i}{\mu}{\Sigma}, the objective is to estimate the mean $\mathbb{\mu}\in\mathbb{R}^{n}$, and either the covariance matrix $\Sigma\in\mathbb{R}^{n\times n}$ or its inverse $\Sigma^{-1}$, which is also called the precision matrix.
In particular, the \emph{inverse} of the covariance matrix, which represents the underlying graph of a Gaussian Markov random field (GMRF), is useful in many applications \cite{GMRFBOOK}.

Both the mean $\mu$ and the covariance $\Sigma$ are often estimated using the maximum likelihood estimator (MLE), given the samples $\{\ve{y}_i\}_{i=1}^m$. The MLE
aims to maximize the probability of sampling $\{\ve{y}_i\}_{i=1}^m$ given the parameters. In the Gaussian case, this leads to the maximization of the density function of the Normal distribution
\begin{equation}\label{eq:MLEorig}
\begin{array}{rl}
\hat\mu,\hat\Sigma^{\mbox{\tiny{MLE}}} =& \displaystyle{\argmax_{\Sigma,\mu} \prod_{i=1}^{m}{\mathbb{P}(\bfy_i|\Sigma,\mu)}}\\
= & \displaystyle{\argmax_{\Sigma,\mu} \prod_{i=1}^{m}\frac{1}{\sqrt{(2\pi)^m\det{(\Sigma)}}}\exp\left(-\frac{1}{2} (\bfy_i-\mu)^T\Sigma^{-1}(\bfy_i-\mu)\right)}.
\end{array}
\end{equation}
This yields $\hat{\mu} = \frac{1}{m}\sum_{i=0}^m\ve{y}_i$ as estimation for the mean and\footnote{Equation \eqref{eq:cov_estimate} is the standard MLE estimator, derived from \eqref{eq:MLEorig}. However, sometimes the unbiased MLE estimation is preferred, where $m-1$ replaces $m$ in the denominator.}
\begin{equation}\label{eq:cov_estimate}
\mat{S}\mydef \hat{\mat{\Sigma}}^{\mbox{\tiny{MLE}}} = \frac{1}{m}\sum_{i=0}^m(\ve{y}_i-\hat{\mu})(\ve{y}_i-\hat{\mu})^T,
\end{equation}
which is also called the empirical covariance matrix. More specifically, by applying $-\log$ to the MLE objective in \eqref{eq:MLEorig} and minimizing it over the inverse covariance matrix we get that $\mat{\Sigma}^{-1}$ is estimated by solving the optimization problem
\begin{equation}\label{eq:MLE}
\min_{\mat{A}\succ 0} f(\mat{A}) \mydef \min_{\mat{A}\succ 0} -\log(\det{\mat{A}})+\tr({\mat{S}\mat{A}}),
\end{equation}
which also leads to \eqref{eq:cov_estimate}.

However, if the number of samples is smaller than the problem dimension, i.e., $m<n$, then \mat{S} in \eqref{eq:cov_estimate} is rank deficient and not invertible, whereas the true $\mat{\Sigma}$ is assumed to be full-rank and positive definite. Nevertheless, in this case one can reasonably estimate $\mat{\Sigma}^{-1}$ by adding further assumptions.
It can be observed in the probability density function in \eqref{eq:MLEorig} that if $(\mat{\Sigma}^{-1})_{ij} = 0$, then the random variables in the $i$-th and $j$-th entries of a vector \gaussian{\ve{y}}{\mu}{\mat{\Sigma}} are conditionally independent, given that the rest of the variables are known \cite{dempster1972covariance}. Therefore, one may look at $\Sigma^{-1}$ as a direct dependency matrix where each of its off-diagonal non-zeros indicates a direct dependency between two variables. For this reason, many applications adopt the notion of estimating a \emph{sparse} inverse of the covariance, $\mat{\Sigma}^{-1}$. (Note that in most cases $\mat{\Sigma}$ remains dense.)
For this purpose, we follow \cite{banerjee2008model,banerjee2006convex,d2008first}, and minimize \eqref{eq:MLE} with a sparsity-promoting $l_1$ prior:
\begin{equation}
\label{eq:MLE_l1_prior}
\min_{\mat{A}\succ 0}\;  F(\mat{A}) \mydef  \min_{\mat{A}\succ 0}\;  f(\mat{A}) + \lambda\|\mat{A}\|_1.
\end{equation}
Here, $f(\mat{A})$ is the MLE objective defined in \eqref{eq:MLE}, $\|\mat{A}\|_1\equiv\sum_{i,j}{|a_{ij}|}$, and $\lambda>0$ is a regularization parameter. The sparsity assumption is justified when most variables are directly statistically dependent on only a small number of variables, and thus conditionally independent of the rest. Problem \eqref{eq:MLE_l1_prior} is also called \emph{Covariance Selection} \cite{dempster1972covariance} and it has a unique solution \cite{banerjee2008model,d2008first}. It is an instance of \eqref{eq:FX}, so it is non-smooth and convex, but unlike \eqref{eq:FX} it is also constrained to the positive definite domain.

Many methods were recently developed for solving \eqref{eq:MLE_l1_prior}---see \cite{banerjee2006convex,banerjee2008model,d2008first,GLASSO,guillot2012iterative,DCQUIC,BIGQUIC,QUIC,mazumder2012exact,ORTHANT,BCDIC} and references therein.
However, as mentioned earlier, in this work we are interested in efficiently solving large scale instances of \eqref{eq:MLE_l1_prior}, where $n$
is large such that $O(n^2)$ variables cannot fit in memory (we assume that the data samples $\{\ve{y}_i\}_{i=1}^m$ do fit in memory).
This makes the solution of \eqref{eq:MLE_l1_prior} particularly challenging, since the gradient of $f(\mat{A})$ includes $A^{-1}$, which is a dense $n\times n$ matrix, coming from the $\log\det$ term. Because of this, most of the existing methods cannot be used to solve \eqref{eq:MLE_l1_prior}, as they use the full gradient of $f(\mat{A})$. The same applies for the strategies of \cite{banerjee2008model,GLASSO} that target the \emph{dense} covariance matrix itself rather than its inverse, using the dual formulation of \eqref{eq:MLE_l1_prior}.
Two exceptions are (1) BigQUIC - a proximal Newton approach in \cite{BIGQUIC}, which was made suitable for large-scale matrices by treating the Newton direction problem in blocks, and (2) a Block-Coordinate-Descent for Inverse Covariance Estimation (BCD-IC) method \cite{BCDIC} that directly treats \eqref{eq:MLE_l1_prior} in blocks. In the following sections we briefly describe the proximal Newton approach for \eqref{eq:MLE_l1_prior}, and review the BCD-IC method of \cite{BCDIC}. Following that, we describe how to accelerate BCD-IC by our multilevel framework, and show improvements for it in the case where problem \eqref{eq:MLE_l1_prior} is solved for a given support---similarly to problem \eqref{eq:FX_coarse} which is constrained to a given support.

\subsection{Proximal Newton methods for sparse inverse covariance estimation}

A few of the current state-of-the-art methods \cite{DCQUIC,BIGQUIC,QUIC,ORTHANT} for \eqref{eq:MLE_l1_prior} involve the ``proximal Newton'' approach described earlier in Section \ref{sec:relaxations_general}. To obtain the Newton descent direction, the smooth part $f(\mat{A})$ in \eqref{eq:MLE_l1_prior} is replaced by a second order Taylor expansion, while the non-smooth $l_1$ term remains intact. This requires computing the gradient and Hessian of $f(\mat{A})$, which are given by
\begin{equation}\label{eq:gradHes}
\nabla f(\mat{A}) = \mat{S}-\mat{A}^{-1}, \quad\quad \nabla^2 f(\mat{A}) = \mat{A}^{-1}\otimes \mat{A}^{-1},
\end{equation}
where $\otimes$ is the Kronecker product. The presence of $\mat{A}^{-1}$ in the gradient not only imposes memory problems in
large scales, it is also expensive to compute. Therefore, the advantage of the proximal Newton approach here is the low overhead: by calculating the $\mat{A}^{-1}$ in $\nabla f(\mat{A})$, we also get the information needed to apply the Hessian \cite{BIGQUIC,QUIC,ORTHANT}.

Similarly to \eqref{eq:F_tilde_general}, the Newton direction $\mat{\Delta}^{(k)}$ is the solution the LASSO problem,
\begin{equation}\label{eq:Quad_MLE_l1_prior}
\begin{array}{rl}
\mat{\Delta}^{(k)} =& \displaystyle{\argmin_{\mat{\Delta}\in\mathbb{R}^{n\times n}}\;\tilde F(\mat{A}^{(k)}+ \mat{\Delta})}\\
= & \displaystyle{\argmin_{\mat{\Delta}\in\mathbb{R}^{n\times n}}f(\mat{A}^{(k)})+\tr(\mat{\Delta}(\mat{S}-\mat{W})) +\frac{1}{2}\tr(\mat{\Delta}\mat{W}\mat{\Delta}\mat{W}) +  \lambda\|\mat{A}^{(k)}+\mat{\Delta}\|_1},
\end{array}
\end{equation}
where $\mat{W} = \left(\mat{A}^{(k)}\right)^{-1}$. The gradient and Hessian of $f(\mat{A})$ in \eqref{eq:gradHes} are featured in the second and third terms in \eqref{eq:Quad_MLE_l1_prior}, respectively. Once the direction $\mat{\Delta}^{(k)}$ is computed, it is added to $\mat{A}^{(k)}$ employing a linesearch procedure to sufficiently reduce the objective in \eqref{eq:MLE_l1_prior} while ensuring positive definiteness. To this end, the updated iterate is $\mat{A}^{(k+1)} = \mat{A}^{(k)} + \alpha\mat{\Delta}^{(k)}$, where $\alpha>0$ may be obtained using Armijo's rule \cite{QUIC}.

\subsection{Restricting the updates to free sets} \label{sec:FreeSet}

In addition, \cite{QUIC} introduced a crucial step: restricting of the Newton direction in \eqref{eq:Quad_MLE_l1_prior} to
a ``free set'' of variables, while keeping the rest as zeros. The free set of a matrix $\mat{A}$ is defined as
\begin{equation}\label{eq:active_set}
\Active(\mat{A}) = \left\{(i,j): \mat{A}_{ij}\neq0 \vee |(\mat{S} - \mat{A}^{-1})_{ij}| > \lambda \right\}.
\end{equation}
If one solves \eqref{eq:Quad_MLE_l1_prior} with respect only to the variables outside this free set, they all remain zero, suggesting that it is worthwhile to (temporarily) restrict \eqref{eq:Quad_MLE_l1_prior} only to the variables in this set \cite{QUIC}. This reduces the computational complexity of most LASSO solvers: given the matrix $W$, the Hessian term in \eqref{eq:Quad_MLE_l1_prior} can be calculated in $O(Kn)$ operations instead of $O(n^3)$, where $K = |\Active\left(A^{(k)}\right)|$. This saves significant computations in each Newton update, and at the same time does not significantly increase the number of iterations needed for convergence \cite{BCDIC}.

\section{Block coordinate descent for sparse inverse covariance estimation (BCD-IC)}\label{sec:BCDIC}
In this section we review the iterative Block Coordinate Descent method for solving large-scale instances of \eqref{eq:MLE_l1_prior}.
In this method, we iteratively update the solution in blocks of matrix variables, where each block is defined as the free set of variables within a
relatively small subset of columns of $\mat{A}$. We iterate over all blocks, and in turn minimize \eqref{eq:MLE_l1_prior} restricted to each block by using a quadratic approximation, while the other matrix entries remain fixed. Since we consider one sub-problem at a time, we can fully store the gradient and Hessian for each block, assuming that the blocks are chosen to be small enough.

We limit our blocks to subsets of columns because this way, the corresponding portion of the gradient \eqref{eq:gradHes}
can be computed as solutions of linear systems. Because the matrix is symmetric, the corresponding rows are updated simultaneously.
Figure \ref{fig:BCD_Sweep} shows an example of a BCD iteration where the subsets of columns are chosen in sequential order. In practice, theses subsets can be non-contiguous and vary between the BCD iterations. We elaborate later on how to partition the columns, and on some  advantages of this block-partitioning.
Partitioning the matrix into small blocks enables our method to solve \eqref{eq:MLE_l1_prior} in high dimensions (up to millions of variables), requiring $O(n^2/p)$ additional memory, where $p$ is the number of blocks (that is in addition to the memory needed for storing the iterated solution $\mat{A}^{(k)}$ itself, and the data $\{\ve{y}_i\}_{i=1}^m$).

\begin{figure}[!t]
\centering
\includegraphics[width=120mm]{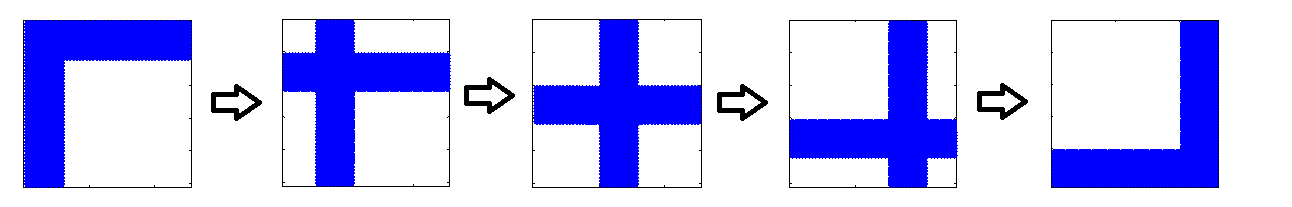}
\caption{Example of a BCD iteration. The blocks are treated successively.}
\label{fig:BCD_Sweep}
\end{figure}

\subsection{BCD-IC iteration}
We now describe a BCD-IC iteration, in which we divide the matrix into blocks, and iteratively update the
solution matrix block by block. Assume that the set of columns $\{1,...,n\}$ is divided into $p$ subsets $\{I_{j}\}_{j=1}^{p}$, where $I_j$ contains the indices
of the columns that comprise the $j$-th block.  We denote the updated matrix after treating the $j$-th block at iteration $k$ by $\mat{A}^{(k)}_j$ and the next iterate is defined once we finish treating all blocks, i.e., $\mat{A}^{(k+1)}=\mat{A}_p^{(k)}$. However, for simplicity of notation, let us denote the updated matrix $\mat{A}_{j-1}^{(k)}$, before treating block $j$ at iteration $k$, by $\matt{A}$.

To update block $j$, we form and minimize a quadratic approximation of problem \eqref{eq:MLE_l1_prior}, restricted to the rows/columns in $I_j$:
\begin{equation}
\label{eq:sub_Quad_MLE_l1_prior}
\min_{\mat{\Delta}_j}\;\tilde F(\matt{A}+\mat{\Delta}_j),
\end{equation}
where $\tilde F(\cdot)$ is the quadratic approximation of \eqref{eq:MLE_l1_prior} around $\matt{A}$, similarly to \eqref{eq:Quad_MLE_l1_prior}, and $\mat{\Delta}_j$ has non-zero entries only in the rows/columns in $I_j$. In addition, we restrict the non-zeros of $\mat{\Delta}_j$ to the free set defined in \eqref{eq:active_set}. That is, $\mat{\Delta}_j$ in \eqref{eq:sub_Quad_MLE_l1_prior} is restricted to the free set
\begin{equation}\label{eq:active_set_j}
\Active_{I_j}(\matt{A}) = \Active(\matt{A})\cap \left\{(i,k): i\in I_j \vee k\in I_j\right\},
\end{equation}
while all other entries in $\mat{\Delta}_j$ are fixed to zero. To calculate \eqref{eq:active_set_j}, we check the condition in \eqref{eq:active_set} only in the columns $I_j$, which requires the gradient \eqref{eq:gradHes} for block $I_j$.
For that, we calculate the columns $I_j$ of $\matt{A}^{-1}$ by solving $|I_j|$ linear systems, with the canonical vectors $\ve{e}_l$ as right-hand-sides for each $l\in I_j$, i.e., $(\matt{A}^{-1})_{I_j}=\matt{A}^{-1}\ve{E}_{I_j}$.
The solution of these linear systems is one of the main computational tasks of our algorithm, and can be achieved in various ways.
For large dimensions, iterative methods such as Conjugate Gradients (CG) are usually preferred, possibly with preconditioning \cite{Saad03}.

\subsubsection{Treating a block-subproblem by Newton's method} \label{sec:Newton}

To get the Newton direction for the $j$-th block, we solve the LASSO problem \eqref{eq:sub_Quad_MLE_l1_prior} by using PCD accelerated by non-linear Conjugate Gradients (PCD-CG) \cite{ELADZIBULEVSKY,TY}. For a complete and detailed description of this algorithm see the Appendix of \cite{BCDIC}.

Let us denote $\mat{W}=\matt{A}^{-1}$. To apply a PCD-CG iteration,
we need to calculate the objective of \eqref{eq:sub_Quad_MLE_l1_prior} and its gradient efficiently. For that, we need to calculate the matrices $\mat{W}\mat{\Delta}_j\mat{W}$, $\mat{S}-\mat{W}$, and $l_1$ term only at the entries \eqref{eq:active_set_j}, where $\mat{\Delta}_j$ is non-zero. We compute only the columns of $\mat{\Delta}_j$, because the rows are obtained by symmetry. The main computational task here involves the ``Hessian-vector product'' $\mat{W}\mat{\Delta}_j\mat{W}$. For that, we reuse the $I_j$ columns of $\matt{A}^{-1}$ calculated for obtaining \eqref{eq:active_set_j}, denoted now by $\mat{W}_{I_j}$. Since we only need the result in the columns $I_j$, we observe that $\left(\mat{W}\mat{\Delta}_j\mat{W}\right)_{I_j} =  \mat{W}\mat{\Delta}_j\mat{W}_{I_j}$, and the product $\mat{\Delta}_j\mat{W}_{I_j}$ can be computed efficiently because $\mat{\Delta}_j$ is sparse.

In order to compute $\mat{W}(\mat{\Delta}_j\mat{W}_{I_j})$ for the entries in \eqref{eq:active_set_j}, we follow the idea of \cite{BIGQUIC} and use the rows (or columns) of $\mat{W}$ that are represented in \eqref{eq:active_set_j}. Besides the columns $I_j$ of $\mat{W}$ we also need the ``neighborhood'' of $I_j$ defined as
\begin{equation}\label{eq:extended_set}
N_j = \left\{ i: \exists k \notin I_j : (i,k)\in \Active_{I_j}(\mat{A}) \right\}.
\end{equation}
The size of this set will determine the amount of additional columns of $\mat{W}$ that we need, and therefore we wish it to be as small as possible.
To achieve that, we follow \cite{BIGQUIC} and define the blocks $\{I_j\}$ using clustering methods, which aim to partition the columns/rows into disjoint subsets, such that there are as few non-zero entries as possible outside the diagonal blocks of the matrix that correspond to each subset. In our notation, we aim that the size of $N_j$ will be as small as possible for every block $I_j$, which is chosen to be relatively small. We use METIS \cite{METIS}, but other methods may be used instead.
Note that only $|N_j|\times|N_j|$ numbers out of $\mat{W}_{N_j}$ are necessary for computing the relevant entries of $\mat{W}(\mat{\Delta}_j\mat{W}_{I_j})$. However, there might be situations where the matrix has a few dense columns, resulting in some sets $N_j$ of size $O(n)$. Computing $\mat{W}_{N_j}$ for those sets is not possible because of memory limitations. This case is treated separately---see \cite{BCDIC} for details.
\subsubsection{Updating the solution with line-search}\label{sec:linesearch}

Denote the solution of the Newton direction problem \eqref{eq:sub_Quad_MLE_l1_prior} by $\mat{\Delta}_j^{(k)}$. Now we wish to update the matrix $\mat{A}^{(k)}_j = \mat{A}^{(k)}_{j-1} + \alpha\mat{\Delta}_j^{(k)}$, where $\alpha>0$ is obtained by a linesearch procedure, which requires evaluating the objective of \eqref{eq:MLE_l1_prior} for several values of $\alpha$.

First, for any sparse matrix $A$ the cost of computing the trace and $l_1$ terms in \eqref{eq:MLE_l1_prior} is proportional to the number of non-zero entries in $A$ (and the number of sample vectors). However, calculating the determinant of a general $n\times n$ sparse matrix for evaluating the $\log\det$ term of \eqref{eq:MLE_l1_prior} may be costly. This may be done by using a sparse Cholesky factorization, but here we assume that $n$ is too large for that.
In our case, however, since $\mat{\Delta}^{(k)}_j$ has a special block structure, we can reduce the $\log\det$ term to a log-determinant of a small dense $|I_j| \times|I_j|$ matrix, and compute it efficiently.

Let us introduce a partitioning of any matrix \mat{A} into blocks, according to a subset of indices $I_j\subseteq\{1,...,n\}$.
Assume without loss of generality that the matrix \mat{A} has been permuted such that the columns/rows with indices in $I_j$ appear first, and let
\begin{equation}\label{eq:partitioning}
\mat{A} =
\left[
\begin{array}{c|ccc}
\mat{A}_{11} &   & \mat{A}_{12} &  \\ \hline
             &   &              &  \\
\mat{A}_{21} &   & \mat{A}_{22} &  \\
             &   &              &
\end{array}
\right]
\end{equation}
be a partitioning of $\mat{A}$. The sub-matrix $\mat{A}_{11}$ corresponds to the elements in rows and columns $I_j$ in \mat{A}. According to the Schur complement \cite{Saad03}, for any invertible matrix and block-partitioning as above, the following holds:
\begin{equation}\label{eq:Schurs_lemma}
\log\det(\mat{A}) = \log\det(\mat{A}_{22}) + \log\det(\mat{A}_{11} - \mat{A}_{12}\mat{A}_{22}^{-1}\mat{A}_{21}).
\end{equation}
Furthermore, for any symmetric matrix $\mat{A}$ the following applies:
\begin{equation}\label{eq:Schurs_PDness}
\mat{A}\succ 0 \Leftrightarrow  \mat{A}_{22} \succ 0 \mbox{ and } \mat{A}_{11} - \mat{A}_{12}\mat{A}_{22}^{-1}\mat{A}_{21}\succ 0.
\end{equation}
Using the above notation and partitioning for $\matt{A}$ and $\mat{\Delta}_j^{(k)}$, we write using \eqref{eq:Schurs_lemma}:
\begin{equation}
\log\det{(\matt{A}+\alpha\mat{\Delta}_j^{(k)})}= \log\det{(\matt{A}_{22})}+ \log\det(\mat{B}_0 +\alpha\mat{B}_1 + \alpha^2 \mat{B}_2)
\label{eq:logdet_blocks}
\end{equation}
where $\;\;\mat{B}_0 =  \matt{A}_{11} - \matt{A}_{12}\matt{A}_{22}^{-1}\matt{A}_{21}$, $\;\;\mat{B}_1 =  \mat{\Delta}_{11} -\mat{\Delta}_{12}\matt{A}_{22}^{-1}\matt{A}_{21} - \matt{A}_{12}\matt{A}_{22}^{-1}\mat{\Delta}_{21}$, and \newline $\mat{B}_2 =  -\mat{\Delta}_{12}\matt{A}_{22}^{-1}\mat{\Delta}_{21}$. (Note that here we replaced $\mat{\Delta}_j^{(k)}$ by $\mat{\Delta}$ to simplify notation.) If the set $I_j$ is relatively small, then so are the matrices $\mat{B}_i\in\mathbb{R}^{|I_j|\times|I_j|}$ in \eqref{eq:logdet_blocks}, and given these matrices we can easily compute the objective $F(\cdot)$. Furthermore, following \eqref{eq:Schurs_PDness}, the constraint $\matt{A} + \alpha\mat{\Delta}_j^{(k)}\succ 0$ involved in a linesearch for $\mat{\Delta}_j^{(k)}$ is equivalent to $\mat{B}_0 +\alpha \mat{B}_1 + \alpha^2 \mat{B}_2 \succ 0$, assuming that $\matt{A}_{22}\succ 0$. Calculating the matrices $\mat{B}_i$ in \eqref{eq:logdet_blocks} seems expensive, but they can be efficiently obtained from the previously computed matrices $W_{I_j}$ and $W_{N_j}$ mentioned earlier---see Appendix \ref{sec:computingBi} for details. Therefore, computing \eqref{eq:logdet_blocks} can be achieved in $O(|I_j|^3)$ time complexity.

Using the properties described above, we can easily apply a linesearch for $\mat{\Delta}_j^{(k)}$, and guarantee in every update that our iterated solution matrix $\matt{A}$ remains positive definite throughout the iterations. More specifically,  in this work we use a variant of the Armijo backtracking rule that was also suggested in \cite{yun2011coordinate}, approximately minimizing the objective over $\alpha$. That is, we choose $\alpha_0=1$, and $0<\beta<1 (=0.5)$, and examine the values of $F(\cdot)$ for $\alpha = \alpha_0\beta^i$ for $i=0,1,2,...$. We iterate over $i$ to find a point where $F(\cdot)$ is minimized over the samples $\alpha_0\beta^i$ subject to $\matt{A}+\alpha\mat{\Delta}_j^{(k)} \succ 0$.
This requires the initialization of the algorithm, $\mat{A}^{(0)}$, to be positive definite.

\begin{algorithm}\label{alg:BCDiteration}
\small
\Indp
\DontPrintSemicolon
\caption{\emph{Block Coordinate Descent for Inverse Covariance Estimation}}
\KwSty{Algorithm: BCD-IC($\mat{A}^{(0)}$,$\{\ve{x}_i\}_{i=1}^m$,$\lambda$)}\;
\For{$k=0,1,2,...$}{
Calculate clusters of elements $\{I_j\}_{j=1}^p$ based on $\mat{A}^{(k)}$.\;
\textit{\% Denote: $\mat{A}^{(k)}_0 = \mat{A}^{(k)}$}\;
\For{$j=1,...,p$}{
Compute $W_{I_j} = \left((\mat{A}^{(k)}_{j-1})^{-1}\right)_{I_j}$. \textit{\% solve $|I_j|$ linear systems}\;
Define $\Active_{I_j}\left(\mat{A}^{(k)}_{j-1}\right)$ as in \eqref{eq:active_set_j}, and define the set $N_j$ in \eqref{eq:extended_set}.\;
Compute $W_{N_j} = \left((\mat{A}^{(k)}_{j-1})^{-1}\right)_{N_j}$. \textit{\% solve $|N_j|$ linear systems}\;
Find the Newton direction $\mat{\Delta}_j^{(k)}$ by solving \eqref{eq:sub_Quad_MLE_l1_prior}.\;
Update the solution: $\mat{A}^{(k)}_j = \mat{A}^{(k)}_{j-1} + \alpha\mat{\Delta}_j^{(k)}$ by linesearch.
}
\textit{\% Denote: $\mat{A}^{(k+1)} = \mat{A}^{(k)}_p$}\;
}
\end{algorithm}

\subsection{Convergence of BCD-IC}\label{sec:convergence-analysis}

The paper \cite{BCDIC} states the following theorem:
\begin{theorem}
\label{thm:Convergence}
In Algorithm \ref{alg:BCDiteration}, the sequence $\left\{\mat{A}_j^{(k)}\right\}$ converges to the global optimum of \eqref{eq:MLE_l1_prior}.
\end{theorem}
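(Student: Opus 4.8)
The plan is to reduce the convergence of Algorithm~\ref{alg:BCDiteration} to the general multilevel/block-coordinate-descent theory already developed in Section~\ref{sec:theory}, most directly to Theorem~\ref{prop:Convergence3} (or, equivalently, to the framework of \cite{BCDTsengYun}), by verifying that a BCD-IC iteration fits the hypotheses of those results. Concretely, I would first recast a single block update \eqref{eq:sub_Quad_MLE_l1_prior} as a relaxation of type \eqref{eq:F_tilde_general}--\eqref{eq:Line-search}: the smooth part $f(\mat{A})=-\log\det\mat{A}+\tr(\mat{S}\mat{A})$ is replaced by its quadratic Taylor expansion around $\matt{A}$, the $l_1$ term is kept intact, the Hessian approximation $H^{(k)}$ is the (positive definite) restriction of $\mat{W}\otimes\mat{W}$ to the free-set coordinates in $I_j$, and the step is accepted after an Armijo-type linesearch. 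The free-set restriction \eqref{eq:active_set_j} is exactly the ``coarse problem'' mechanism \eqref{eq:FX_coarse}: solving \eqref{eq:sub_Quad_MLE_l1_prior} over the free set leaves all other entries at zero by the characterization following \eqref{eq:active_set}, so no descent is lost relative to the full block.

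The key steps, in order: (1)~establish that the iterates remain in a compact subset of the positive-definite cone on which $f$ is smooth with bounded Hessian. Here I would use that $F(\mat{A}^{(0)})$ is finite, the iteration is monotonically non-increasing (each block update reduces $\tilde F$, hence $F$, by Lemma~\ref{lem:monotonicity} applied to the block subproblem), and the sublevel set $\{\mat{A}\succ0 : F(\mat{A})\le F(\mat{A}^{(0)})\}$ is compact and bounded away from the boundary of the PD cone --- this is the standard argument (see \cite{banerjee2008model,QUIC}) using that $-\log\det\mat{A}\to+\infty$ as $\mat{A}$ approaches a singular matrix and that $\lambda\|\mat{A}\|_1$ plus $\tr(\mat{S}\mat{A})$ controls the large-eigenvalue direction when $\lambda>0$. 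On this compact set, $\gamma_{min}I\prec\nabla^2 f\prec\gamma_{max}I$ and hence the block Hessians $\mat{W}\otimes\mat{W}$ restricted to any free set satisfy uniform two-sided bounds, as required by Lemma~\ref{lem:sufficient_reduction} and Theorem~\ref{prop:Convergence2}. (2)~Check that the linesearch in Section~\ref{sec:linesearch} is the Armijo rule of \eqref{eq:Line-search} with $\alpha$ bounded away from zero (Lemma~\ref{lem:monotonicity}); the positive-definiteness safeguard only removes large $\alpha$ and does not affect the lower bound $\alpha_{min}$ once we are on the compact set. (3)~Observe that the block partition is re-chosen every sweep but always covers all $n$ coordinates, so every coordinate is updated at least once per sweep --- the essentially-cyclic (Gauss--Seidel) condition of \cite{BCDTsengYun}. (4)~Invoke Theorem~\ref{prop:Convergence3} / \cite[Theorem~4.1]{BCDTsengYun}: every limit point is a stationary point of $F$, and since $F$ is convex (and here strictly convex, with a unique minimizer \cite{banerjee2008model,d2008first}) every stationary point is the global optimum, so $\{\mat{A}_j^{(k)}\}$ converges to it.

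I expect the main obstacle to be step~(1): unlike the generic setting of Theorem~\ref{prop:Convergence2}, the objective \eqref{eq:MLE_l1_prior} has the domain constraint $\mat{A}\succ0$ and the Hessian $\mat{W}\otimes\mat{W}$ is \emph{not} globally bounded --- it blows up as $\mat{A}$ degenerates. So the crux is to show the iterates cannot approach the boundary of the PD cone nor escape to infinity, i.e.\ that they stay in a compact set $\R$ on which the abstract hypotheses hold; this is what makes the uniform spectral bounds $\gamma_{min}I\prec H^{(k)}\prec\gamma_{max}I$ legitimate and lets the block Hessians be treated as bona fide preconditioners in the sense of the remark following Theorem~\ref{prop:Convergence2}. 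A secondary subtlety is that each block subproblem \eqref{eq:sub_Quad_MLE_l1_prior} is itself solved only approximately by PCD-CG, so one must argue (as in \cite{BCDIC}) that the inexactness is benign --- e.g.\ by taking at least one full PCD sweep, which already yields a sufficient-decrease direction of the form \eqref{eq:IST_direction} and hence keeps the no-stagnation argument of Lemma~\ref{lem:sufficient_reduction} intact. Once these points are in place, the rest is a direct citation of the earlier theorems.
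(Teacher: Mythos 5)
Your proposal follows essentially the same route as the paper, which itself only sketches this result by deferring to the block-coordinate-descent analysis of \cite{BCDTsengYun} and its extension in \cite{QUIC} to the positive-definite-constrained problem; you correctly identify the same two load-bearing ingredients (the Gauss--Seidel coverage of all coordinates each sweep, and the compactness of the sublevel set keeping iterates uniformly bounded away from the boundary of the PD cone so the Hessian bounds hold). Your write-up is in fact more detailed than the paper's own one-paragraph justification, and the obstacles you flag (the unbounded Hessian near singular matrices, the inexact inner solves) are exactly the points the cited references are invoked to handle.
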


The proof of this theorem is based on the analysis of \cite{BCDTsengYun,QUIC}.
In \cite{BCDTsengYun}, a general block-coordinate-descent approach is analyzed to solve minimization problems of the form $F(\mat{A})=f(\mat{A})+\lambda h(\mat{A})$, where $f(\cdot)$ is a smooth function and $h(\cdot)$ is a separable convex function, which in our case are \eqref{eq:MLE} and $\|\mat{A}\|_1$, respectively. Although this setup fits the the problem \eqref{eq:MLE_l1_prior}, \cite{BCDTsengYun} treats the problem in the $\real{n}$ domain, while the minimization in \eqref{eq:MLE_l1_prior} is being constrained over the symmetric positive definite domain. To overcome this limitation, the authors in \cite{QUIC} extended the analysis in \cite{BCDTsengYun} to treat the specific constrained problem \eqref{eq:MLE_l1_prior}. Except for the solution of the inner LASSO problems, \cite{QUIC} is equivalent to BCD-IC using only one block which contains all variables. Hence, the convergence proof of BCD-IC in \cite{BCDIC} extends that of \cite{QUIC}.

\section{Application of the multilevel framework to sparse inverse covariance estimation}\label{sec:MLInvCov}

Given an iterate $A^{(k)}$, proximal Newton methods like \cite{QUIC,BIGQUIC,ORTHANT} or BCD-IC limit their Newton directions to the variables in the free set, saving a significant amount of computations.
However, in \cite{QUIC} it is shown that if $A^{(k)}$ is far from the optimal solution $A^{*}$, then $|\Active(A^{(k)})|$ may be several times larger than $|\Active(A^{*})|$, since the entries of the gradient of $A^{(k)}$ are typically large.
This may impose extensive computations: we get larger and more difficult Newton problems.
Moreover, for large scales, the cost of solving linear systems for the gradients is directly proportional to the number of non-zeros in the matrices. As the iterates progress, the support size of the iterated matrices reduces, until it converges to that of $A^{*}$. As mentioned in the first part of this paper, if we knew the non-zeros of $A^{*}$, solving \eqref{eq:MLE_l1_prior} would require less computations---this again motivates the use of our multilevel framework for this problem.

Even though \eqref{eq:MLE_l1_prior} has many unique properties, we apply our multilevel framework in Algorithm \ref{alg:ML} as is, using a method like BCD-IC or \cite{QUIC,BIGQUIC,ORTHANT} as the relaxation. In particular, in this paper we use BCD-IC, because we target large-scale problems. We now elaborate on how to define the ingredients of the multilevel approach, the restricted relaxation, selection of hierarchy $\{\C_l\}_{l=0}^L$, and parameters $\nu$ and $\nu_c$.

Similarly to \eqref{eq:FX_coarse}, we define a coarse problem at level $l$ by limiting problem \eqref{eq:MLE_l1_prior} to a subset of entries, denoted by $\C_l$
\begin{equation}
\label{eq:MLE_l1_prior_C}
 \min_{A\succ 0,\;\support(A)\in\C_l}\;  f(A) + \lambda\|A\|_1,
\end{equation}
where $f(A)$ is defined in \eqref{eq:MLE}. To solve problem \eqref{eq:MLE_l1_prior_C} using the proximal Newton methods \cite{QUIC,BIGQUIC}, for example, one may restrict the Newton direction to $\Active(A)\cap\C_l$ instead of $\Active(A)$. If one uses BCD-IC, the same applies for the free set \eqref{eq:active_set_j}. This allows a significant improvement for BCD-IC in this case: unlike the original case where we need all the rows of $W_{I_j}$ for checking \eqref{eq:active_set_j}, now we \emph{apriori} need only the rows of $W_{I_j}$ that are represented in $\C_l$, because \eqref{eq:active_set_j} is restricted to $\C_l$. Using Schur complement properties, we get those rows of $W_{I_j}$ using only the columns of $A^{-1}$ that correspond to the neighborhood of $I_j$ in $\C_l$, and as a result, we no longer need the solution of $|I_j|$ linear systems. A detailed description is given in Appendix \ref{sec:reducingGradient}. This significantly reduces the cost of a BCD-IC relaxation.

Next, within each of the relaxation methods mentioned, there is an inherited selection of variables in the form of free set. This affects the choice for the multilevel hierarchy.  As in \eqref{eq:hierarchy1}, we choose all available variables for $\C_0$, and apply the finest level relaxation for \eqref{eq:MLE_l1_prior} without an additional constraint. However, if for example we let $\C_1$ include half of the variables, the free set in the corresponding relaxation for $\C_1$ will most likely be nearly the same as for $\C_0$, as the size of a free set is typically much smaller than half of all the variables. Therefore, in order to inforce a significant reduction in the problem size, we need to select a subset of the free set. So, to define the multilevel hierarchy \eqref{eq:hierarchy1}, we use the free set calculated in the fine level relaxation in a previous cycle\footnote{Calculating a free set is a relatively expensive procedure. Therefore, we use the most relevant free set and gradient that we have from previous computations. If we use \cite{BIGQUIC} as relaxation, that would be the free set of the fine level relaxation from the previous cycle. If we use BCD-IC, then we use the union of free sets determined for all the blocks as a free set. As an initial free set, one may use $\{(i,j): |S_{ij}| > \lambda\}\cup\support{(A^{(0)})}$.}. Now, let $\Active_k$ be this free set, then similarly to Section \eqref{sec:likely}, for $\C_1$ we first choose the entries in $\support(\mat{A}^{(k)})$, and then choose $\lceil\frac{1}{2}|\Active_k|\rceil - |\support(\mat{A}^{(k)})|$ additional variables with the highest absolute value of the gradient. For the rest of the levels, we chose $|\C_{l+1}| = \max\{\lceil\frac{1}{2}|\C_l|\rceil,|\support{(\mat{A}^{(k)})}|\}$ for $1\le l<L$, based on the support and the size of the gradient, until $\C_L = \support{(\mat{A}^{(k)})}$.

Lastly, we apply our multilevel framework for solving \eqref{eq:MLE_l1_prior} using $\nu=\nu_c=1$ relaxations in Algorithm \ref{alg:ML}. We apply only one relaxation on each level because the methods mentioned are quite effective and require only a few iterations to solve \eqref{eq:MLE_l1_prior}. That is not only because these methods use second order information, but also since there is no point in solving \eqref{eq:MLE_l1_prior} up to high accuracy because of statistical noise. The problem with these methods is that each of their iterations is expensive, especially if the free set is large. Therefore, to get the most out of our multilevel structure, and especially not to overdo the coarsest level solution, we apply only one relaxation on each level, including the coarsest.

\subsection{Convergence of the multilevel framework for inverse covariance estimation}\label{sec:convergenceInvCov}
As noted before, problem \eqref{eq:MLE_l1_prior} is different from \eqref{eq:FX} because it has the positive definiteness constraint. Still, by Corollary \ref{prop:Convergence3}, Algorithm \ref{alg:ML} converges when it is applied with any relaxation method that satisfies the conditions of Theorem \ref{prop:Convergence2}, and Lemmas \ref{lem:monotonicity} and \ref{lem:sufficient_reduction}. Indeed, the methods QUIC \cite{QUIC} (and BIG-QUIC \cite{BIGQUIC}) and BCD-IC \cite{BCDIC} satisfy those conditions. By Lemma 2 in \cite{QUIC} we know that all iterates of QUIC are contained in the compact level set $U=\{A:f(A)<f(A_0),A\succ0\}$, so throughout the iterations $\theta_{min}I\preceq A\preceq\theta_{max}I$, and therefore the Hessian $\nabla^2 f$, which is also used as the iteration matrix $H^{(k)}$, is bounded from below and from above. Also, Propositions 3-5 in \cite{QUIC} include the result of Lemma \ref{lem:monotonicity}, and those appear in \cite{BCDIC}. Lemma \ref{lem:sufficient_reduction} holds for both \cite{QUIC} and \cite{BCDIC} because both satisfy its conditions and apply \eqref{eq:F_tilde_general}-\eqref{eq:Line-search} in blocks and cover all variables periodically (by satisfying the Gauss-Seidel rule in \cite{BCDTsengYun}). Finally, the conditions of Theorem \ref{prop:Convergence3} hold for \eqref{eq:MLE_l1_prior} and the relaxations \cite{QUIC,BIGQUIC,BCDIC}, and therefore, Algorithm \ref{alg:ML} converges for the solution of \eqref{eq:MLE_l1_prior} when used with either one of the methods \cite{QUIC,BIGQUIC,BCDIC} as a relaxation.

\section{Numerical results: sparse inverse covariance estimation}

In this section we compare the performance of several approaches to our multilevel framework, for solving large-scale instances of \eqref{eq:MLE_l1_prior}.
Our multilevel framework is applied on BCD-IC (and denoted ML-BCD), and compared with ``stand-alone'' BCD-IC (Algorithm \ref{alg:BCDiteration}) and BIG-QUIC \cite{BIGQUIC} (for our tests we adapted the authors' software, which is written and parallelized in C). Furthermore, we include other acceleration frameworks applied on BCD-IC: (1) a continuation strategy (BCDcont.) and a (2) ``divide and conquer'' strategy (DC-BCD) \cite{DCQUIC}. Our MATLAB-based code (including routines in C) is available at: {\url{http://www.cs.technion.ac.il/~eran/}}.

The continuation strategy was generally described at the end of Section \ref{sec:relaxations_general}. More precisely, here we use four decreasing values of regularization parameter $\lambda$ in \eqref{eq:FX}: $\lambda_4>\lambda_3>\lambda_2>\lambda_1$, and apply one BCD-IC iteration for each of those in decreasing order. The final value, $\lambda_1$ is the value in which we want to solve \eqref{eq:FX}. Once this sequence of iterations is over, we keep applying BCD-IC using $\lambda_1$ until convergence. Here we choose $\lambda_4 = \frac{1+\lambda}{2}$, and the rest of the values are linearly spaced between $\lambda_4$ and the original value $\lambda_1$. We note that since we use normalized data, we know that any reasonable $\lambda$ for \eqref{eq:MLE_l1_prior} must be smaller than 1.

The divide and conquer (DC) strategy relies on the fact that if we assume that the solution of \eqref{eq:MLE_l1_prior} is a block-diagonal matrix, then the problem can be separated to a sum of smaller problems according to those blocks. Using this idea, the strategy is applied as follows. We initialize $A^{(0)}$ with a diagonal matrix, and use the graph induced by the free set \eqref{eq:active_set} to create a hierarchy of nested partitioning of the original matrix. We first partition the matrix indices $\{1,...,n\}$ into two sub-sets, then each of those is divided into two to create four sub-sets, and so on until the subsets are small enough (in our tests we terminate the partitioning at size smaller than 4000). Then, assuming that the result of \eqref{eq:MLE_l1_prior} is a block-diagonal matrix with blocks according to the partitioning, we perform an iteration of BCD-IC for each of those block sub-problems separately. Then, we perform a union of blocks opposite to the partitioning process---each block that was split to two sub-blocks is now merged into one again. We apply a BCD-IC iteration for the merged blocks, starting from the block-diagonal approximation obtained from the previous partitioning. We repeat this process of uniting the sub-problems and applying BCD-IC iterations until all the sub-blocks are united to $\{1,...,n\}$ back again. From that point, we apply BCD-IC until convergence. For more details, see \cite{DCQUIC}.

We initialize all methods with the identity matrix. As a stopping criterion for all methods, we follow \cite{BIGQUIC,BCDIC} and use the condition:
$\min_{\bfz}\|\partial F(A^{(k)})\|_1  < \mbox{5e-3}\|A^{(k)}\|_1$, where $\partial F$ is the subdifferential \eqref{eq:subgradient1}. All solutions achieved by all algorithms correspond to objective values $F(A^*)$ which are identical up to several significant digits and have an essentially identical support size. All the experiments were run on a machine with 2 Intel Xeon\footnote{Intel and Xeon are trademarks of Intel Corporation in the U.S. and/or other countries.} E-2650 2.0GHz processors with 16 cores, 64GB RAM and Windows 7 OS. For BCD-IC, we approximate $W_{I_j}$ and $W_{N_j}$ by using conjugate gradients, which we stop once the relative residual drops below $10^{-5}$ and $10^{-4}$, respectively. In addition, we approximate the solution of the Newton direction problem (using PCD-CG) up to a relative precision of $10^{-4}$. The block size of BCD-IC for all tests is 256.

\begin{figure}[!t]
\centering
\includegraphics[width=40mm]{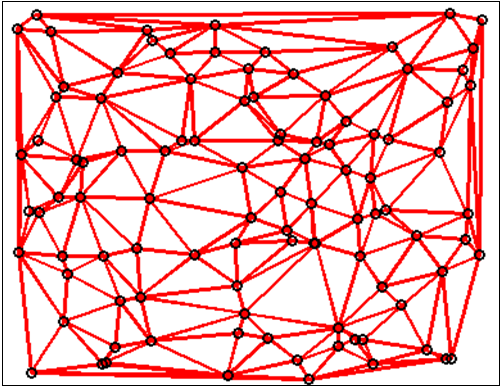}
\caption{Example of a synthetic unstructured planar graph.}
\label{fig:unstructured}
\end{figure}

\subsection{Synthetic experiments} Our first set of experiments is performed on synthetic data, where we use a homogenous random planar graph-Laplacian as a sparse precision matrix. To generate the graph $G(V,E)$ we choose $n$ random points on the unit square as the nodes $V$, and apply a Delaunay triangulation to generate edges in $E$. A 2D example of such graph is shown in figure \ref{fig:unstructured}. Given the graph connectivity, we create a true precision matrix $\Sigma^{-1}$ which is defined by $\forall(i,j)\in E:  (\Sigma^{-1})_{i,j} = -1$, and $(\Sigma^{-1})_{ii} = -\textstyle{\sum_{i\neq j}{(\Sigma^{-1})_{ij}}}$. These matrices are positive semi-definite and have the constant vector as null-space. To make them PD, we remove the points $(i,j)$ whose coordinates are closer than $1/\sqrt n$ to the unit boundary. The resulting matrices have about 6-7 non-zeros per row. To generate the data samples, we randomly generate $m$ Gaussian vectors \gaussian{\ve{v}_i}{0}{I} and form the data samples by $\bfy_i = L^{-1}\bfv_i$, where $L$ is the Cholesky factorization of the true $\Sigma^{-1}$. Following that, the data is normalized to have zero mean and unit variance for each variable (i.e., $diag(S) = I$).
We generate matrices of sizes $n$ varying from approximately 125,000 to 500,000, and generate $m=200$ samples for each. We show the results for four reasonable values of $\lambda$ for each example.

\begin{table}[t]
\centering
\setlength{\tabcolsep}{2pt}
\small \vspace{-10pt}
\label{tb:RPG_results}
\begin{tabular}{|c|c|c|c|c|c|c|c|c|c|c|c|c|c|}
\hline
\hline
\mc{2}{|c|}{Problem parameters}&ML-BCD&BCD cont.&DC-BCD &BCD-IC&BIG-QUIC\\\hline
$n$&$\lambda$&time&time&time&time&time\\
$(m)$&$\kappa({A}^*)$&(it)&(it)&(it)&(it)&(it)\\
$\|\Sigma^{-1}\|_0$&$\|A^*\|_0$&max-supp&max-supp&max-supp&max-supp&max-supp\\\hline\hline
     &0.70&\textbf{545s}&2024s&886s&1447s&3608s\\
     &4.19&(5)&(6)&(1)&(4)&(4)\\
     &491388&\textbf{492744}&497188&493076&501504&501528\\
\cline{2-7}
124294&0.65&\textbf{948s}&2802s&1841s&4322s&10467s\\
(200)&9.31&(6)&(6)&(2)&(8)&(7)\\
867224&908680&911034&940648&\textbf{910190}&980758&981412\\
\cline{2-7}
    &0.60&\textbf{2187s}&4668s&2751s&8420s&13704s\\
    &24.89&(7)&(7)&(2)&(10)&(7)\\
    &1505346&\textbf{1505346}&1621252&1505852&1853252&1858108\\
\cline{2-7}
    &0.55&\textbf{5006s}&8786s&6674s&15647s&20636s\\
    &68.61&(8)&(8)&(3)&(8)&(6)\\
    &2324416&\textbf{2324416}&2612854&2325800&3598916&3617662\\
\hline
\hline
&0.71&\textbf{2054s}&8237s&3547s&6093s&23233s\\
&4.17&(5)&(6)&(1)&(4)&(4)\\
&1040121&\textbf{1042325}&1051423&1043531&1060405&1060477\\
\cline{2-7}	
249045&0.66&\textbf{3909s}&11908s&5114s&13569s&57127s\\
(200)&9.50&(6)&(6)&(1)&(6)&(7)\\
1739435&1943239&1949805&2015077&\textbf{1945279}&2101425&2103019\\
\cline{2-7}			
&0.61&\textbf{7775s}&20068s&11656s&31293s&67756s\\
&26.65&(7)&(7)&(2)&(8)&(7)\\
&3273373&\textbf{3273373}&3536169&3280735&4068427&4082373\\
\cline{2-7}	
    &0.56&\textbf{18792s}&40898s&23189s&66352s&121707s\\
    &90.60&(8)&(8)&(2)&(7)&(7)\\
    &5136827&\textbf{5136827}&5812203&5155709&8112377&8191103\\
\hline
\hline	 		
&0.73&\textbf{5960s}&30774s&13310s&23024s&94976s\\
&2.96&(5)&(6)&(1)&(4)&(4)\\
&1899994&1905608&1915628&\textbf{1904166}&1929250&1929358\\
\cline{2-7}			 		 		
498604&0.69&\textbf{11958s}&43500s&17806s&39646s&181759s\\
(200)&7.32&(6)&(6)&(1)&(5)&(6)\\
3484746&3352522&3361596&3444112&\textbf{3358350}&3539656&3541236\\
\cline{2-7}			 		 		 		
&0.65&\textbf{24507s}&62334s&38014s&113950s&263444s\\
&18.53&(6)&(7)&(2)&(9)&(7)\\
&5310282&\textbf{5310282}&5647748&5348420&6156774&6168488\\
\cline{2-7}		
 &0.61&47647s&103961s&\textbf{40169s}&192403s&354082s\\
 &48.46&(7)&(7)&(1)&(9)&(7)\\
 &8170008&\textbf{8170008}&8923932&8183808&10930776&10990358\\
\hline
\hline
\end{tabular} 
\caption{Large-scale synthetic results for a random planar graph-Laplacian precision matrix.}
\end{table}

In the tables below, we show timings and number of iterations that it took for each method to reach the convergence criterion. We also show the maximal support size that was encountered throughout the iterations (denoted by max-supp), the support size of the minimizer ($\|A^*\|_0$) and an estimate of its condition number ($\kappa(A^*)$), calculated by MATLAB's {\tt condest}. For all methods except DC-BCD, the number of iterations indicate the number of BCD-IC iterations (including coarse levels in ML-BCD and continuation phase in BCDcont.) For DC-BCD, the DC phase is included in the first iteration.

Table \ref{tb:RPG_results} summarizes the results for the synthetic tests. It shows that ML-BCD is the fastest method in almost all the tests. The second-fastest method is DC-BCD, which is also effective at accelerating BCD-IC because the matrices $A^{*}$ in these tests are quite well-conditioned. This agrees with the analysis of DC in \cite{DCQUIC}.  The methods BCDcont and BCD-IC are comparable, with BCD-IC being faster for the high $\lambda$ tests and BCDcont being faster at the harder lower $\lambda$ tests. For the low $\lambda$ BCD-IC and BIG-QUIC see an increase in the maximal observed support compared to the final one. This harms the performance of these methods because some iterations require computations with a less sparse matrix. All three acceleration methods prevent this excess of non-zeros and significantly accelerate BCD-IC as this phenomenon becomes more severe. In particular, ML-BCD managed to speedup BCD-IC approximately 4 times. We note that this is not the only reason for the acceleration---these methods also exploit their ability to reduce the objective value using a relatively small support, which is cheaper to process. BIG-QUIC is significantly outperformed by all methods in these tests.

\subsection{Experiments using real-world data sets} Next, we examine the performance of the methods for large-scale real-world experiments. We use gene expression data sets that are available at the Gene Expression Omnibus (\url{http://www.ncbi.nlm.nih.gov/geo/}), and are reported in \cite{honorio2013inverse,BCDIC}. As before, the data is preprocessed to have zero mean and unit variance for each variable (i.e., $\diag(S) = I$). Table \ref{tb:GEO_results} shows the name-codes of the data sets, as well as the numbers of variables $(n)$ and samples $(m)$, where $m \ll n$. We included three values of $\lambda$ for each data set.

\begin{table}[t]
\centering
\setlength{\tabcolsep}{2pt}
\small \vspace{-10pt}
\label{tb:GEO_results}
\begin{tabular}{|c|c|c|c|c|c|c|c|c|c|c|}
\hline
\hline
\mc{3}{|c|}{Problem parameters}&ML-BCD &BCDcont.& DC-BCD& BCD-IC & BIG-QUIC\\ \hline
\mr{2}{Data}&\textbf{$n$}  &\textbf{$\lambda$}& time & time  & time & time & time\\
            &\textbf{$(m)$}&\textbf{$\kappa(A^*)$}  & (it) &  (it) &  (it) &  (it)  & (it)\\
			&              &\textbf{$\|A^*\|_0$}& max-supp  & max-supp  & max-supp  & max-supp& max-supp  \\
\hline
\hline
         &     &0.73  &\textbf{183s}&  217s   &  406s   &   429s     & 4328s\\
         &     &347.5  &(9)&              (7)    & (5)    & (7)    & (11)\\
GSE-      &     &237911 &\textbf{237911}&           240507    & 266567    &  499361 & 776831\\
\cline{3-8}
1898:     &21775 &0.70  &\textbf{243s} & 317s &  1457s & 1017s   & 6502s\\
Liver    &(182) & 530.0 &(9) &   (7)  &  (5)  & (7) & (13)\\
cancer    &      &294285&  \textbf{294291} &   320019 &   1376909  &  1148951 & 1265145\\
\cline{3-8}
        &      &0.67  &\textbf{491s} & 585s  & 1811s&  2380s & Not\\
          &      &766.3 &(11) &      (8)   &  (5) & (8) & converged\\
          &      &346999  &\textbf{348839} &    362069  & 1045079& 1747431 & 1955983\\
\hline
\hline
    &     &0.75  &\textbf{62s}&  107s    &    101s  &   133s  &  1184s \\
          &     & 177&(7)&                (7)     &     (4)  &   (7) & (6)    \\
GSE-          &     &108895&\textbf{108895}&   131497     &     109285  &   168465 &  189639 \\
\cline{3-8}
20194:    &22283&0.70  &\textbf{146s} &   237s & 263s &329s   & 3556s\\
Breast    &(278)&   544   &(8) &    (8)  & (4)   &  (7)   & (11)\\
cancer    &     &197809&\textbf{197809} & 219671  & 233963   &460665 & 559571  \\
\cline{3-8}
          &     &0.65  &\textbf{360s}&515s  &   1424s &   850s  &11068s \\
     &     & 1291 & (11)&                 (8)    &    (5)   &   (7) & (18)    \\
	&     &309665&\textbf{311401}&         325839    &  839419   &   718461 & 1454469  \\
\hline
\hline
        &     &0.81  &\textbf{177s} &  494s   &   500s&  603s  &8001s   \\
        &     &220 &(6) &   (6)     &   (4) &    (6)  & (8)   \\
GSE- 	&     &328441&\textbf{329531} &  379859     &   350591 &496529 & 640097   \\
\cline{3-8}
17951:  &54675&0.78  &\textbf{494s} &  1258s  &   844s&    1347s     & 13807s\\
Prostate    &(154)&  381&(7) &                 (9)    &    (2) &  (6) & (10)    \\
cancer    & &538061&\textbf{538061} &  559951    &    557277 &  980713  & 1370869   \\
\cline{3-8}
     &     &     0.75  &\textbf{1484s} & 2125s &   3432s &   4567s   &  30850s  \\
     &     &        669   & (8) &                 (7)     &   (4) &    (6) &(14)    \\
	 &     &805883&\textbf{805883} &   823593     &   921863 &    1633517     & 2644767\\
\hline
\hline
        &      &0.94   &\textbf{1472s}  & 3333s &   3353s &  6051s  & 67562s\\
          &      & 44 &(7)    &   (6) &     (3) &   (8)     & (8)\\
GSE-          &      &1976582&\textbf{1976582}    & 2052722 & 2007554    &2804832 & 3092670     \\
\cline{3-8}
14322:     &104702&0.92   &\textbf{4571s}  &7590s &     8308s &     16794s  & 56089s\\
Liver    &(76)  & 104  &(8)    &     (6) &       (3) &    (8) & (11)\\
cancer    &  &3394500&\textbf{3394500}  & 3514358 &       3403044 &  5402144 & 6170556    \\
\cline{3-8}
          &      &0.90   &\textbf{14456s} &19421s &   21802s &    65674s    & 155671s\\
         &      & 181&(10)   &           (7) &      (3) &   (9)   & (15)\\
         &      &4972394&\textbf{4972394}   &   5168408 &       4999572 &   11049268& 10390290    \\
\hline
\hline
\end{tabular}
\caption{Large-scale gene-expression analysis results}
\end{table}

Table \ref{tb:GEO_results} summarizes the results for these real-world experiments. We found these datasets to be more challenging than the synthetic experiments above, which is mostly evident in the higher condition number of the estimated matrices $A^*$. Here, the advantage of the multilevel framework is even more evident. ML-BCD again outperforms the other options by significant factors, and in particular again accelerates BCD-IC by a factor of 3-4.
Because of the relatively high condition numbers, DC-BCD does not perform as well as before, and is generally outperformed by BCDcont. In all cases, the accelerated versions outperformed BCD-IC and BIG-QUIC, because of the decreased support/free set during the solution process. As noted, smaller supports/free sets induce smaller Newton direction problems and faster solution of linear systems, in which each matrix-vector multiplication costs proportionally to the number of non-zeros in the matrix.
By limiting the sparsity of the solution, the acceleration frameworks save significant computations and improve runtime substantially. Out of the three methods, ML-BCD is the most efficient one.

\section{Application of the multilevel framework to $l_1$-regularized logistic regression}
In the next two sections we consider the $l_1$-regularized logistic regression problem. Logistic Regression is a popular classification method in the machine learning literature. Recently, a $l_1$-regularized version of the Logistic Regression problem was introduced to obtain a sparse model, and was shown to be less prone to overfitting \cite{NG2004}.
Given a set of samples $\left\{\ves{x}{i}\right\}_{i=1}^{m} \in \real{n}$ and their respective labels $\left\{y_i\right\}_{i=1}^{m} \in \left\{-1,+1\right\}$, the $l_1$-regularized Logistic Regression classifier is obtained by solving the following optimization problem\footnote{A bias term $b$ can be added to the loss function. Therefore, $\ves{x}{i}^T\ve{w}$ is replaced with $\ves{x}{i}^T\ve{w}+b$ in the loss function and the optimization is held over \ve{w} and $b$.}:
\begin{equation}\label{eq:L1LogReg}
\min_{\ve{w}\in \real{n}} L(\ve{w}) +\normone{\ve{w}} = \min_{\ve{w}\in \real{n}}  C\sum_{i=1}^{m}\log\left(1+e^{-y_i\ves{x}{i}^T\ve{w}}\right) + \normone{\ve{w}},
\end{equation}
where $C$ is a regularization parameter that balances between the sparsity of the model and the loss function $L(\ve{w})$. By dividing this problem by $C$ and setting $\lambda=\frac{1}{C}$, this problem gets the same form as \eqref{eq:FX}.

Many specialized iterative solvers for \eqref{eq:L1LogReg} are available in the literature---see \cite{CDN,GLMNET,NEWGLMNET,IPM,BBR,SCD} and references therein. In this section we demonstrate that our multilevel framework can accelerate the existing solvers for this problem, and in particular, we focus on accelerating the methods \cite{NEWGLMNET} and \cite{CDN}. The method \cite{NEWGLMNET} is a proximal Newton method, as generally described in Section \ref{sec:relaxations_general}, which restricts the Newton direction to a free set (similarly to Section \ref{sec:FreeSet}), and treats the Newton problem using Coordinate Descent. The CDN algorithm \cite{CDN} solves \eqref{eq:L1LogReg} using a Coordinate Descent approach, where in each coordinate update, CDN solves a one-dimensional proximal Newton problem with a one-dimensional line-search procedure. For these purposes, the methods require the gradient and Hessian terms of $L(\ve{w})$:
\begin{equation}
\nabla L(\ve{w}) = C\sum_{i=1}^{m}\left(\tau(y_i\ves{x}{i}^T\ve{w})-1\right)y_i\ves{x}{i} , \qquad
\nabla^2 L(\ve{w}) = CXDX^T,
\label{eq:GradAndHessLogReg}
\end{equation}
where $\tau(s)=\frac{1}{1+e^{-s}}$ is the derivative of the logistic loss function $\log(1+e^{-s})$, $D\in \real{m \times m}$ is a diagonal matrix with elements $D_{ii} =\tau(y_i\ves{x}{i}^T\ve{w})\left(1-\tau(y_i\ves{x}{i}^T\ve{w})\right)$, and $X\in \real{n \times m}$ is a matrix with all the data samples, i.e.,  $X=\left[\ves{x}{1},\dots,\ves{x}{m} \right]$.

To accelerate the convergence of the iterative methods above, we apply Algorithm \ref{alg:ML} using a hierarchy of supports as in \eqref{eq:hierarchy1}. Since the methods \cite{NEWGLMNET} and \cite{CDN} have a restriction to a free set, we apply the same strategy for choosing $\C_1$ as in Section \ref{sec:MLInvCov}.

\section{Numerical results: $l_1$-regularized logistic regression}
We compare between the performances of CDN \cite{CDN} and the newGLMNET \cite{NEWGLMNET} and their accelerated versions denoted by an `ML-' prefix. The stopping criterion of all methods, suggested by \cite{NEWGLMNET}, is $\normone{\nabla^S L(\ve{w}^{(k)})} \le \varepsilon \frac{\min(\#pos,\#neg)}{m}\normone{\nabla^S L(\ve{w}^{(1)})}$, where $\#pos$ and $\#neg$ are the number of positive and negative labels in the samples, and $\nabla^S L(\ve{w})$  is the minimum norm subgradient.
All the methods were implemented in C++ based on the implementation in LIBLINEAR \cite{LIBLINEAR}. All the experiments were executed on a machine with 2 Intel Xeon\footnote{Intel and Xeon are trademarks of Intel Corporation in the U.S. and/or other countries.} E5-2699V3 2.30GHz processors with 36 cores, 128GB RAM, and Linux Cent-OS.

We use the data sets {\tt news20}, {\tt gisette}, {\tt webspam}, {\tt rcv1}, and {\tt epsilon} with values for the regularizer $C$ as reported in \cite{NEWGLMNET}. The $\varepsilon$ value for {\tt news20} dataset is 1e-4, and for the other data sets it is 1e-3. For  ML-newGLMNET,  the number of coordinate descent iterations for the finest and mid-levels are selected small (one and two respectively), and on the coarsest level we allow up to five iterations.

Table \ref{tb:Results} summarizes the results of our experiments, where we present the timing results in seconds and the number of iterations in parentheses. The number of iterations for newGLMNET is for each proximal Newton update, for CDN accounts for updating all the variables ($n$ one-dimensional proximal Newton problems), and for the accelerated methods accounts for the number of ML-cycles.
To save space in Table \ref{tb:Results}, (ML-)newGLMNET is denoted  by (ML-)nGLM.

The multilevel approach shows the best performance and runtime improvement for both methods in almost all cases. In some cases ML-CDN achieves a runtime reduction of factor 4 or 5 compared to CDN. This improvement comes from saving several iterations until it achieves a support size comparable to that of the true solution. This fact is reflected in the number of iterations of ML-CDN compared to those of CDN.
The reductions in runtime for ML-newGLMNET are more limited, as newGLMNET usually converges in a few iterations.
In particular, in the dataset {\tt rcv1}, the support of the solution concentrates 88\% of the non-zeros in the matrix $X$, and the multilevel acceleration is unable to save computations.
Still, the number of iterations is reduced or is similar, while the runtime decreases by up to 45\%.


\begin{table}[t]
	\small
\centering
\begin{tabular}{|c|c|c|c|c|c|c|c|c|c|c|}
\hline
\hline
\mc{3}{|c|}{Problem parameters}&nGLM &ML-nGLM& CDN& ML-CDN \\ \hline
\mr{2}{Data}&\textbf{$n$}  &\textbf{$C$}& time & time  & time & time \\
            &\textbf{$(m)$}&$\|\ve{w}^*\|_0$  & (it) &  (it) &  (it) &  (it)  \\
\hline
\mr{2}{news20}   &1355191 &64   &3.52s & {\bf 1.87s} &  16.31s & 7.57s  \\
              &(15997) & 2792 &(13)  & (7)  &  (182)  & (9) \\
\hline
\mr{2}{rcv1} &    47236     & 4    &  37.89 & {\bf 36.43}&  167.54& 90.72\\
             &    (541920)  & 10893&  (13) & (14)&  (86)& (19)\\
\hline
\mr{2}{webspam}  & 16609143 &   64  & 122.2  & \textbf{87.87} &  2228.4& 532.0\\
			    & (280000) &   7914  & (8)  & \textbf{(1)} &  (51)& (1)\\
\hline
\mr{2}{epsilon}  &     2000 &  0.5 & 196.0 & \textbf{162.0}& 2933.6& 1501.1\\
                &      (400000) &  1106 & (13) & (13)& (139)& (35)\\
\hline
\mr{2}{gisette}  &     5000  & 0.25 & 1.44  & \textbf{0.92}  &  19.89& 3.76\\
                &   (6000) & 554  & (10) & (4)  &  (91)& (7)\\
\hline
\hline
\end{tabular}
\label{tb:Results}
\caption{Performance results for accelerating the solution of $l_1$-regularized logistic regression.}
\end{table}

\section{Conclusions}

In this work we present a multilevel framework for solving $l_1$ regularized sparse optimization
problems. To solve such problems efficiently,
we take advantage of the expected sparseness of the solution. A multilevel hierarchy of problems of similar type is created and
traversed in order to accelerate the optimization process. This framework is then applied for solving the sparse inverse covariance estimation and the $l_1$-regularized Logistic Regression problems.  The former is challenging especially for large-scale data sets, due to time and memory limitations. In this case, the multilevel framework enables an incremental construction of the solution in the number of non-zeros and avoids rather dense iterates. The biggest advantage of our framework is observed when the problem is hard to solve (lower regularization parameter, higher condition number).

\section{Acknowledgements} The authors would like to thank Ms. Aviva Herman for her technical contribution.

\appendix
\section{Supplementary material for BCD-IC}

\subsection{Computing the Linesearch Matrices}\label{sec:computingBi}
In this section we describe how to calculate the matrices $\mat{B}_i$ in \eqref{eq:logdet_blocks} efficiently, using the matrices $W_{I_j}$ and $W_{N_j}$ that are computed before the linesearch procedure (See Algorithm \ref{alg:BCDiteration}).
These matrices can be computed very efficiently using properties of the Schur complement, avoiding the computational burden of solving large linear systems involving the sub-matrix $\mat{A}_{22}$.

First, $\mat{B}_0$ is readily available by inverting a small $|I_j|\times |I_j|$ matrix.
Denoting $A^{-1} = W$, and following Schur complement for the partitioning $\eqref{eq:partitioning}$, we have
\begin{equation}\label{eq:computingBi1}
W_{11} = (A_{11} - A_{12}A_{22}^{-1}A_{21})^{-1}.
\end{equation}
Therefore, since the indices partitioned as `1' are those in $I_j$,
then
\begin{equation}\label{eq:B_0}
\mat{B}_0 = W_{11}^{-1}
\end{equation}
is available as part of $W_{I_j}$, with little effort of inverting the small $W_{11}$.
Second, we also have
\begin{equation}\label{eq:computingBi2}
W_{21} = -A_{22}^{-1}A_{21}W_{11}.
\end{equation}
Then, $\mat{B}_1$ is also available as $\mat{B}_1 = \mat{\Delta}_{11} + \mat{T} + \mat{T}^T$,
where $\mat{\Delta}$ denotes $\mat{\Delta}_j$ and
\begin{equation}\label{eq:T}
\mat{T} = -\mat{\Delta}_{21}^T\mat{A}_{22}^{-1}\mat{A}_{21} = \mat{\Delta}_{12}W_{21}\mat{B}_0.
\end{equation}
The latter is available since $W_{21}$ is again a part of $W_{I_j}$.

For $\mat{B}_2 = -\mat{\Delta}_{12}\mat{A}_{22}^{-1}\mat{\Delta}_{21}$, we only need $\mat{A}_{22}^{-1}$ in the block that correspond to $N_j$. That is because $\mat{\Delta}_{21}$, for example, is non-zero only in the rows that correspond to $N_j$.  Following Schur complement we have
\begin{equation}\label{eq:computingBi3}
W_{22} = \mat{A}^{-1}_{22} - \mat{A}^{-1}_{22}A_{21}W_{11}A_{12}\mat{A}^{-1}_{22},
\end{equation}
and after plugging in \eqref{eq:B_0}, \eqref{eq:computingBi2} and considering symmetry, we get
\begin{equation}\label{eq:A22}
\mat{A}_{22}^{-1} = W_{22} - W_{21}\mat{B}_0W_{21}^T.
\end{equation}
Now, we need the values of this matrix only at the block that corresponds to the columns and rows in $N_j$ (an $N_j\times N_j$ matrix). These, again are available from the computation of $W_{N_j}$. Given this matrix, we compute
\begin{equation}\label{eq:A22_detailed}
\mat{B}_2 = \mat{\Delta}_{12}^TW_{22} \mat{\Delta}_{21} + \left[\mat{\Delta}_{12}^TW_{21}\right]\mat{B}_0\left[W_{21}^T\mat{\Delta}_{21}\right],
\end{equation}
where the matrix $\mat{\Delta}_{12}^TW_{21}$ in brackets is computed also for \eqref{eq:T}.
\subsection{Reducing matrix inversions in BCD-IC for a given support}\label{sec:reducingGradient}
In this section we show how to efficiently calculate the rows of $W_{I_j} = (A^{-1})_{I_j}$ which are used in a BCD-IC update for a block $I_j\subset\{1,...,n\}$, restricted to a given sparse support $\C$. Define the ``$\C$-neighborhood'' of $I_j$ as
\begin{equation}\label{eq:extended_set_C}
N_j^\C = \left\{ i: \exists k \notin I_j : (i,k)\in \C \right\}.
\end{equation}
Assume that we calculate $W_{N_j^{\C}}$, the columns of $A^{-1}$ that are in $N_j^{\C}$. Now, the rows of $W_{I_j}$ that are needed for BCD-IC are those in $I_j \cup N_j^\C$ because these are the only rows of $W_{I_j}$ that are necessary for computing the objective and gradient of \eqref{eq:sub_Quad_MLE_l1_prior}. Since those $N_j^\C$ rows are available in $W_{N_j^{\C}}$ from symmetry, we only need the full block $W_{I_jI_j}$ to have all the needed rows.

Recall the partitioning \eqref{eq:partitioning}, where the blocks denoted by '1' correspond to $I_j$ and those denoted by '2' correspond to $\{1,...,n\}\setminus I_j$. Following the Schur complement property \eqref{eq:computingBi2}, we are able to compute the term
\begin{equation}\label{eq:reducingGradients1}
K = A_{12}W_{21} = -A_{12}A_{22}^{-1}A_{21}W_{11},
\end{equation}
using the previously computed $W_{N_j^\C}$. This is not immediate, as $W_{21}$ is a sub-matrix of $W_{I_j}$. However, following symmetry we have the rows of $W_{21}$ that correspond to indices in $\N_j^{\C}$ in $W_{N_j^{\C}}$, and because $A_{12}$ has non-zeros only in those columns, the other rows of $W_{21}$ are multiplied by 0 and are not necessary for \eqref{eq:reducingGradients1}.

Now, using the known matrix $K$, we compute $W_{11}$ without solving linear systems. By inverting \eqref{eq:computingBi1} and multiplying by $W_{11}$ from the right we get
\begin{equation}\label{eq:reducingGradients2}
I = A_{11}W_{11} + K,
\end{equation}
where $K$ is defined in \eqref{eq:reducingGradients1} and $I$ denotes the identity matrix of size $|I_j|\times |I_j|$. This helps us compute $W_{11} = A_{11}^{-1}(I - K)$ with little effort, assuming the block $I_j$ is small.

\bibliographystyle{siam}
\bibliography{patentprovisional}

\begin{thebibliography}{10}

\bibitem{ARMIJO}
{\sc L.~Armijo}, {\em Minimization of functions having lipschitz continuous
  first partial derivatives.}, Pacific Journal of Mathematics, 16 (1966),
  pp.~1--3.

\bibitem{banerjee2008model}
{\sc O.~Banerjee, L.~El~Ghaoui, and A.~d'Aspremont}, {\em Model selection
  through sparse maximum likelihood estimation for multivariate gaussian or
  binary data}, J. of Machine Learning Research, 9 (2008), pp.~485--516.

\bibitem{banerjee2006convex}
{\sc O.~Banerjee, L.~El~Ghaoui, A.~d'Aspremont, and G.~Natsoulis}, {\em Convex
  optimization techniques for fitting sparse gaussian graphical models}, in
  Proceedings of the 23rd ICML, ACM, 2006, pp.~89--96.

\bibitem{INVCOVSPEECH}
{\sc J.~Bilmes}, {\em Factored sparse inverse covariance matrices}, in
  Proceedings of the {IEEE} International Conference on Acoustics, Speech, and
  Signal Processing (ICASSP), vol.~2, 2000, pp.~II1009--II1012 vol.2.

\bibitem{BF07}
{\sc J.~M. Bioucas-Dias and M.~A. Figueiredo}, {\em A new twist: two-step
  iterative shrinkage/thresholding algorithms for image restoration}, Image
  Processing, IEEE Transactions on, 16 (2007), pp.~2992--3004.

\bibitem{CW08}
{\sc E.~Candes and M.~B. Wakin}, {\em An introduction to compressive sampling},
  IEEE Signal Processing Magazine,  (March 2008), pp.~21--30.

\bibitem{d2008first}
{\sc A.~d'Aspremont, O.~Banerjee, and L.~El~Ghaoui}, {\em First-order methods
  for sparse covariance selection}, SIAM Journal on Matrix Analysis and App.,
  30 (2008), pp.~56--66.

\bibitem{LASSOWITHSSF}
{\sc I.~Daubechies, M.~Defrise, and C.~De~Mol}, {\em An iterative thresholding
  algorithm for linear inverse problems with a sparsity constraint},
  Communications on pure and applied mathematics, 57 (2004), pp.~1413--1457.

\bibitem{dempster1972covariance}
{\sc A.~P. Dempster}, {\em Covariance selection}, Biometrics,  (1972),
  pp.~157--175.

\bibitem{GENEEXPRESSIONANALYSIS}
{\sc A.~Dobra, C.~Hans, B.~Jones, J.~Nevins, G.~Yao, and M.~West}, {\em Sparse
  graphical models for exploring gene expression data}, Journal of Multivariate
  Analysis, 90 (2004), pp.~196 -- 212.
\newblock Special Issue on Multivariate Methods in Genomic Data Analysis.

\bibitem{Donoho06}
{\sc D.~L. Donoho}, {\em Compressed sensing}, IEEE Trans. Inf. Theory, 52
  (2006), pp.~1289�--1306.

\bibitem{elad2010sparse}
{\sc M.~Elad}, {\em Sparse and redundant representations: from theory to
  applications in signal and image processing}, Springer, 2010.

\bibitem{LASSOWITHPCD}
{\sc M.~Elad, B.~Matalon, and M.~Zibulevsky}, {\em Coordinate and subspace
  optimization methods for linear least squares with non-quadratic
  regularization}, Appl. Comput. Harmon. Anal., 23 (2007), pp.~346--367.

\bibitem{LIBLINEAR}
{\sc R.-E. Fan, K.-W. Chang, C.-J. Hsieh, X.-R. Wang, and C.-J. Lin}, {\em
  Liblinear: A library for large linear classification}, Journal of Machine
  Learning Research, 9 (2008), pp.~1871--1874.

\bibitem{FN03}
{\sc M.~Figueiredo and R.~Nowak}, {\em An em algorithm for wavelet-based image
  restoration}, IEEE Trans. Signal Process., 12 (2003), pp.~906--916.

\bibitem{GPSR}
{\sc M.~Figueiredo, R.~Nowak, and S.~J. Wright}, {\em Gradient projection for
  sparse reconstruction: Application to compressed sensing and other inverse
  problems}, IEEE J. Selected Topics Signal Process., 1 (2007), pp.~586--597.

\bibitem{fletcher1987practical}
{\sc R.~Fletcher}, {\em Practical methods of optimization wiley},  (1987).

\bibitem{LASSOWITHCD}
{\sc J.~Friedman, T.~Hastie, H.~H{\"o}fling, R.~Tibshirani, et~al.}, {\em
  Pathwise coordinate optimization}, The Annals of Applied Statistics, 1
  (2007), pp.~302--332.

\bibitem{GLASSO}
{\sc J.~Friedman, T.~Hastie, and R.~Tibshirani}, {\em Sparse inverse covariance
  estimation with the graphical lasso}, Biostatistics, 9 (2008), pp.~432--441.

\bibitem{GLMNET}
{\sc J.~Friedman, T.~Hastie, and R.~Tibshirani}, {\em Regularization paths for
  generalized linear models via coordinate descent}, Journal of Statistical
  Software, 33 (2010), pp.~1--22.

\bibitem{BBR}
{\sc A.~Genkin, D.~D. Lewis, and D.~Madigan}, {\em Large-scale bayesian
  logistic regression for text categorization}, Technometrics, 49 (2007),
  pp.~291--304.

\bibitem{guillot2012iterative}
{\sc D.~Guillot, B.~Rajaratnam, B.~T. Rolfs, A.~Maleki, and I.~Wong}, {\em
  Iterative thresholding algorithm for sparse inverse covariance estimation},
  NIPS, Lake Tahoe CA,  (2012).

\bibitem{honorio2013inverse}
{\sc J.~Honorio and T.~S. Jaakkola}, {\em Inverse covariance estimation for
  high-dimensional data in linear time and space: Spectral methods for riccati
  and sparse models}, in Proc. of the 29th Conference on UAI, 2013.

\bibitem{DCQUIC}
{\sc C.-J. Hsieh, I.~Dhillon, P.~Ravikumar, and A.~Banerjee}, {\em A
  divide-and-conquer method for sparse inverse covariance estimation}, in NIPS
  25, 2012, pp.~2339--2347.

\bibitem{BIGQUIC}
{\sc C.-J. Hsieh, M.~A. Sustik, I.~Dhillon, P.~Ravikumar, and R.~Poldrack},
  {\em {Big {\&} Quic}: Sparse inverse covariance estimation for a million
  variables}, in NIPS 26, 2013, pp.~3165--3173.

\bibitem{QUIC}
{\sc C.-J. Hsieh, M.~A. Sustik, I.~S. Dhillon, and P.~Ravikumar}, {\em Quic:
  Quadratic approximation for sparse inverse covariance estimation}, Journal of
  Machine Learning Research, 15 (2014), pp.~2911--2947.

\bibitem{huang2013optimal}
{\sc H.~Huang, E.~Haber, and L.~Horesh}, {\em Optimal estimation of l1
  regularization prior from a regularized empirical bayesian risk standpoint},
  Inverse Problems and Imaging,  (2013).

\bibitem{BRAINCONNECTIVITY}
{\sc S.~Huang, J.~Li, L.~Sun, J.~Liu, T.~Wu, K.~Chen, A.~Fleisher, E.~Reiman,
  and J.~Ye}, {\em Learning brain connectivity of alzheimer's disease from
  neuroimaging data}, in Advances in Neural Information Processing Systems 22,
  Y.~Bengio, D.~Schuurmans, J.~Lafferty, C.~Williams, and A.~Culotta, eds.,
  Curran Associates, Inc., 2009, pp.~808--816.

\bibitem{SNM03}
{\sc F.~M. Jean-Luc~Starck, Mai K.~Nguyen}, {\em Wavelets and curvelets for
  image deconvolution: a combined approach}, Signal Process., 83 (2003),
  pp.~2279--2283.

\bibitem{METIS}
{\sc G.~Karypis and V.~Kumar}, {\em A fast and high quality multilevel scheme
  for partitioning irregular graphs}, SIAM Journal on Scientific Computing, 20
  (1998), pp.~359--392.

\bibitem{IPM}
{\sc K.~Koh, S.-J. Kim, and S.~Boyd}, {\em An interior-point method for
  large-scale l1-regularized logistic regression}, J. Mach. Learn. Res., 8
  (2007), pp.~1519--1555.

\bibitem{PROXIMALNEWTONGENERAL}
{\sc J.~D. Lee, Y.~Sun, and M.~A. Saunders}, {\em Proximal newton-type methods
  for minimizing composite functions}, SIAM Journal on Optimization, 24 (2014),
  pp.~1420--1443.

\bibitem{mazumder2012exact}
{\sc R.~Mazumder and T.~Hastie}, {\em Exact covariance thresholding into
  connected components for large-scale graphical lasso}, Journal of Machine
  Learning Research, 13 (2012), pp.~781--794.

\bibitem{NG2004}
{\sc A.~Y. Ng}, {\em Feature selection, l1 vs. l2 regularization, and
  rotational invariance}, in Proceedings of the Twenty-first International
  Conference on Machine Learning, ICML '04, New York, NY, USA, 2004, ACM,
  pp.~78--.

\bibitem{ORTHANT}
{\sc P.~A. Olsen, F.~{\"o}ztoprak, J.~Nocedal, and S.~J. Rennie}, {\em
  Newton-like methods for sparse inverse covariance estimation.}, in NIPS 25,
  2012, pp.~764--772.

\bibitem{mista2014}
{\sc P.~Parpas, D.~Luong, D.~Rueckert, and B.~Rustem}, {\em A multilevel
  proximal algorithm for large scale composite convex optimization}.
\newblock Submitted, March, 2014.

\bibitem{ravishankar2013learning}
{\sc S.~Ravishankar and Y.~Bresler}, {\em Learning doubly sparse transforms for
  images}, Image Processing, IEEE Transactions on, 22 (2013), pp.~4598--4612.

\bibitem{GMRFBOOK}
{\sc H.~Rue and L.~Held}, {\em Gaussian {M}arkov Random Fields: {T}heory and
  Applications}, vol.~104 of Monographs on Statistics and Applied Probability,
  Chapman \& Hall, London, 2005.

\bibitem{Saad03}
{\sc Y.~Saad}, {\em Iterative methods for sparse linear systems, $2^{nd}$
  edition}, SIAM, 2003.

\bibitem{BCD00}
{\sc S.~Sardy, A.~G. Bruce, and P.~Tseng}, {\em Block coordinate relaxation
  methods for nonparametric signal denoising with wavelet dictionaries},
  Comput. Graph. Statist., 9 (2000), pp.~361--379.

\bibitem{SCD}
{\sc S.~Shalev-Shwartz and A.~Tewari}, {\em Stochastic methods for l1
  regularized loss minimization}, in Proceedings of the 26th Annual
  International Conference on Machine Learning, ICML '09, New York, NY, USA,
  2009, ACM, pp.~929--936.

\bibitem{sra2012optimization}
{\sc S.~Sra, S.~Nowozin, and S.~J. Wright}, {\em Optimization for machine
  learning}, MIT Press, 2012.

\bibitem{LASSO}
{\sc R.~Tibshirani}, {\em Regression shrinkage and selection via the lasso},
  Journal of the Royal Statistical Society. Series B (Methodological),  (1996),
  pp.~267--288.

\bibitem{BCDIC}
{\sc E.~Treister and J.~S. Turek}, {\em A block-coordinate descent approach for
  large-scale sparse inverse covariance estimation.}, in NIPS 27, 2014.

\bibitem{TY}
{\sc E.~Treister and I.~Yavneh}, {\em A multilevel iterated-shrinkage approach
  to $l_{1}$ penalized least-squares minimization}, Signal Processing, IEEE
  Transactions on, 60 (2012), pp.~6319--6329.

\bibitem{BCDTsengYun}
{\sc P.~Tseng and S.~Yun}, {\em A coordinate gradient descent method for
  nonsmooth separable minimization}, Mathematical Programming, 117 (2009),
  pp.~387--423.

\bibitem{WYGZ09}
{\sc Z.~Wen, W.~Yin, D.~Goldfarb, and Y.~Zhang}, {\em A fast algorithm for
  sparse reconstruction based on shrinkage, subspace optimization and
  continuation}, SIAM Sci. Comp., 32 (2010), pp.~1832--1857.

\bibitem{witten2009covariance}
{\sc D.~M. Witten and R.~Tibshirani}, {\em Covariance-regularized regression
  and classification for high dimensional problems}, Journal of the Royal
  Statistical Society: Series B (Statistical Methodology), 71 (2009),
  pp.~615--636.

\bibitem{SPARSA}
{\sc S.~J. Wright, R.~Nowak, and M.~Figueiredo}, {\em Sparse reconstruction by
  seperable approximation}, IEEE Trans. Signal Processing, 57 (2009),
  pp.~2479--2493.

\bibitem{LASSOWITHCD2}
{\sc T.~T. Wu and K.~Lange}, {\em Coordinate descent algorithms for lasso
  penalized regression}, The Annals of Applied Statistics,  (2008),
  pp.~224--244.

\bibitem{CDN}
{\sc G.-X. Yuan, K.-W. Chang, C.-J. Hsieh, and C.-J. Lin}, {\em A comparison of
  optimization methods and software for large-scale l1-regularized linear
  classification}, Journal of Machine Learning Research, 11 (2010),
  pp.~3183--3234.

\bibitem{NEWGLMNET}
{\sc G.-X. Yuan, C.-H. Ho, and C.-J. Lin}, {\em An improved glmnet for
  l1-regularized logistic regression}, Journal of Machine Learning Research, 13
  (2012), pp.~1999--2030.

\bibitem{yun2011coordinate}
{\sc S.~Yun and K.-C. Toh}, {\em A coordinate gradient descent method for ℓ
  1-regularized convex minimization}, Computational Optimization and
  Applications, 48 (2011), pp.~273--307.

\bibitem{ELADZIBULEVSKY}
{\sc M.~Zibulevsky and M.~Elad}, {\em L1-l2 optimization in signal and image
  processing}, Signal Processing Magazine, IEEE, 27 (2010), pp.~76--88.

\bibitem{zou2005regularization}
{\sc H.~Zou and T.~Hastie}, {\em Regularization and variable selection via the
  elastic net}, Journal of the Royal Statistical Society: Series B (Statistical
  Methodology), 67 (2005), pp.~301--320.

\end{thebibliography}

\end{document}